\newtheorem{theorem}{Theorem}[section]
\newtheorem{corollary}[theorem]{Corollary}
\newtheorem{lemma}[theorem]{Lemma}
\newtheorem{proposition}[theorem]{Proposition}
\theoremstyle{definition}
\newtheorem{definition}[theorem]{Definition}
\newtheorem{remark}[theorem]{Remark}
\newtheorem{question}[theorem]{Question}
\numberwithin{equation}{section}
\renewcommand{\Re}{\textnormal{Re}}
\renewcommand{\Im}{\textnormal{Im}}
\renewcommand\d{\textnormal{d}}
\begin{document}

\title[Invariable generation of almost all alternating groups]{Almost all alternating groups are invariably generated by two elements of prime order}

\author[Joni Ter\"{a}v\"{a}inen]{Joni Ter\"{a}v\"{a}inen}
\address{Department of Mathematics and Statistics, University of Turku, 20014 Turku, Finland}
\email{joni.p.teravainen@gmail.com}

\begin{abstract} We show that for all $n\leq X$ apart from $O(X\exp(-c(\log X)^{1/2}(\log \log X)^{1/2}))$ exceptions, the alternating group $A_n$ is invariably generated by two elements of prime order.  This answers (in a quantitative form) a question of Guralnick, Shareshian and Woodroofe.
\end{abstract}

\maketitle

\section{Introduction}

We say that a finite group $G$ is \emph{invariably generated} by elements $g_1,\ldots, g_k$ if for any $g_1',\ldots, g_k'\in G$ with $g_i'$ belonging to the conjugacy class of $g_i$ we have $\langle g_1',\ldots, g_k'\rangle=G$. In other words, the subset $\{g_1,\ldots, g_k\}$ generates the group even if we replace each element by any of its conjugates.

Invariable generation of finite simple groups has received considerable attention. It is known that every finite simple group is invariably generated by two elements of unspecified order \cite{guralnick-malle}, \cite{lubotzky}. Invariable generation by a few random elements has been studied, among others, in \cite{dixon}, \cite{pemantle-peres-rivin}, \cite{eberhard-ford-green}, \cite{mckemmie}, \cite{garzoni}. The expected number of random elements required for invariable generation has been studied e.g. in \cite{lubotzky}, \cite{lucchini}.  Dolfi, Guralnick, Herzog and Praeger \cite{dghp} asked the question: Which finite simple groups are invariably generated by two elements of prime (or prime power) order? 

We shall focus on invariable generation of the alternating groups $A_n$. For the alternating groups, Shareshian and Woodroofe \cite{shareshian} showed that for $n\geq 8$ a power of two, the group $A_n$ fails to be invariably generated by two elements of prime order. Nevertheless, it is possible that $A_n$ is always generated by an element of prime order together with an element of prime power order; in fact, Guralnick, Shareshian and Woodroofe \cite[Section 5]{guralnick} recently asked this question in the following form (see also \cite[Question 1.2--1.4]{shareshian} and \cite[Section 6]{dghp}).

\begin{question}\label{ques}
For every $n\geq 5$,  is the alternating group $A_n$ invariably generated by an
element whose order is a prime power divisor $p^a$ of $n$, together with an element of prime
order $r>\sqrt{n}$?
\end{question}

In \cite{guralnick}, Guralnick, Shareshian and Woodroofe proved that all $5\leq n\leq 10^{15}$ have this property, which provides considerable numerical evidence for Question \ref{ques}. In \cite{shareshian}, Shareshian and Woodroofe proved that the asymptotic lower density of such $n$ is at least $1-10^{-28}$ (with $a=1$ above).  

In this paper, we prove the following almost-all result on invariable generation of the alternating groups $A_n$. 

\begin{theorem}[Almost all alternating groups are invariably generated by two prime order elements]\label{thm_main} There exists a constant $c>0$ such that, for all $n\leq X$ apart from $\ll X\exp(-c(\log X)^{1/2}(\log \log X)^{1/2})$ exceptions, the alternating group $A_n$ is invariably generated by an element of order $p$ together with an element of order $r$ for some primes $p,r$. Moreover,  we may require that $p\mid n$ and $r>n/\exp(2(\log n)^{1/2}(\log \log n)^{1/2})$.
\end{theorem}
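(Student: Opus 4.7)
The plan is to give an explicit construction of two elements of prime order which invariably generate $A_n$ for almost all $n \leq X$. Set $H := \exp((\log X)^{1/2}(\log \log X)^{1/2})$, so that the theorem's threshold $n / \exp(2(\log n)^{1/2}(\log \log n)^{1/2})$ is morally $n/H^2$. For each $n$ I aim to choose an odd prime $p \mid n$ together with an odd prime $r$ satisfying $n-p<r\leq n-3$, then set $\sigma$ to be a product of $n/p$ disjoint $p$-cycles (possible since $p \mid n$) and $\tau$ to be a single $r$-cycle. Both elements lie in $A_n$ since $p,r$ are odd.

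The first task is to verify that any such $(\sigma,\tau)$ invariably generates $A_n$. Let $\sigma',\tau'$ be arbitrary conjugates and put $R := \mathrm{supp}(\tau')$. Transitivity is immediate: since $|R^c|=n-r<p$, no $p$-cycle of $\sigma'$ can be contained in $R^c$, so every $p$-cycle meets $R$, and the $\langle \sigma',\tau'\rangle$-orbit of any point of $R$ contains $R$ (via $\tau'$) together with every $p$-cycle meeting $R$ (via $\sigma'$), hence all of $\{1,\ldots,n\}$. For primitivity, either $n$ is prime and every block size divides $n$ (forcing trivial blocks), or $n$ is composite so that $p \leq n/2$ and hence $r>n/2$; a block of size $b$ must then satisfy $b \leq n/r<2$ (if $\tau'$ permutes blocks cyclically) or $b \geq r>n/2$ (if $\tau'$ stabilises each block, forcing $R$ into one block). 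Jordan's classical theorem then applies: a primitive subgroup of $S_n$ containing a cycle of prime length $r \leq n-3$ contains $A_n$, and since $\sigma',\tau' \in A_n$ we conclude $\langle \sigma',\tau'\rangle = A_n$.

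The second task is to produce such a pair $(p,r)$ for almost all $n \leq X$. When $n$ is prime, take $p=n$ and any odd prime $r \in (n/Y, n-3]$, which exists by PNT. When $n$ is composite and not a power of $2$, take $p = P(n)$, the largest prime factor of $n$, which is then odd; any prime $r \in (n-H, n-3]$ works, since $P(n) \geq H$ ensures $r > n-p$ and automatically $r > n/Y$. The possibly bad $n \leq X$ therefore fall into three classes: (i) powers of $2$, contributing $O(\log X)$ and negligible; (ii) $H$-smooth numbers, bounded by $\Psi(X,H) \ll X \rho(u)$ with $u = (\log X / \log \log X)^{1/2}$, which via $\rho(u) = \exp(-(1+o(1)) u \log u)$ yields the required bound $X\exp(-c(\log X)^{1/2}(\log \log X)^{1/2})$; and (iii) numbers $n$ with $P(n) \geq H$ but with $(n-H,n]$ containing no prime.

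The main obstacle is controlling class (iii): this requires a sharp \emph{primes in almost all short intervals} result at the scale $H = \exp((\log X)^{1/2}(\log \log X)^{1/2})$, bounding $\#\{n \leq X : (n-H,n] \text{ is prime-free}\}$ by $O(X \exp(-c (\log X)^{1/2}(\log \log X)^{1/2}))$. I would obtain this either by invoking results in the Matom\"aki--Radziwi{\l}{\l} / Huxley framework, or by combining zero-density estimates for the Riemann zeta function with a second-moment (Saffari--Vaughan) argument; matching the exponent in the exception bound to the scale $H$ precisely is the technical heart of the proof.
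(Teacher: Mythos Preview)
Your group-theoretic construction is correct and in fact coincides with the paper's Lemma~3.2 (which the paper invokes only when $n$ has a prime factor $p\geq n^{0.9}$): a product of $n/p$ disjoint $p$-cycles together with a single $r$-cycle, $n-p<r\leq n-3$, does invariably generate $A_n$ via Jordan's theorem, exactly as you argue.

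The gap is in class~(iii). You need
\[
\#\{n\leq X:\ (n-H,n]\text{ contains no prime}\}\ \ll\ X/H^{c}
\quad\text{with }H=\exp\bigl((\log X)^{1/2}(\log\log X)^{1/2}\bigr),
\]
and this is \emph{not known unconditionally}; none of the tools you list delivers it. The Matom\"aki--Radziwi\l\l\ method requires a factorisation of the Dirichlet polynomial, which a bare prime sum $\sum\Lambda(n)n^{-s}$ does not admit. The zero-density/Saffari--Vaughan route gives a variance of order $h^{2}X\sum_{|\gamma|\leq X/h}X^{2(\beta-1)}$, and a single zero at $\beta=1-c(\log X)^{-2/3-o(1)}$ (which Vinogradov--Korobov does not exclude) already forces this sum to be $\gg\exp(-c'(\log X)^{1/3+o(1)})$, so Chebyshev yields at best $X\exp(-c'(\log X)^{1/3+o(1)})$ exceptions---far larger than $X/H^{c}$.

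This obstruction is precisely why the paper does \emph{not} take a single $r$-cycle. Instead it lets $\tau$ be a product of $t$ disjoint $r$-cycles with $tr\in[n-h,n-3]$ a product of two primes, $t\asymp h^{1-c}$. The arithmetic input is then Theorem~1.3: an $Xh^{-c}$ exceptional-set bound for $P_{2}$'s in $[x,x+h]$. Two ingredients make this possible. First, the extra prime factor $t$ provides a short Dirichlet polynomial $P_{1}(s)$, enabling the Matom\"aki--Radziwi\l\l\ splitting into the sets where $|P_{1}|$ is small (mean-value theorem) versus large (large-value estimate bounds the measure by $X^{O(\varepsilon)}$). Second, the paper introduces a model function $\widetilde\Lambda(n)=1-\sum_{\beta\geq1-10\varepsilon}n^{\rho-1}$ that subtracts off the contribution of zeta zeros near the $1$-line; the Dirichlet polynomial of $\Lambda-\widetilde\Lambda$ then enjoys a genuine power-saving pointwise bound (Lemma~5.5), which combines with the large-value bound to close the argument. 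The price is more intricate group theory (Proposition~3.1 with conditions (i)--(v)) to handle an element with many $r$-cycles rather than one, together with Lemmas~4.1--4.5 to control the resulting arithmetic constraints.
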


Here, and in the rest of the paper, $c$ stands for a (very small) positive constant that is the same on every occurrence.

It was proved in \cite{shareshian} that under the Riemann hypothesis almost all $n$ satisfy Question~\ref{ques} (with $a=1$). Guralnick, Shareshian and Woodroofe \cite{guralnick} state: ``It would already be somewhat interesting to give a proof that does not rely on the Riemann
Hypothesis that the set of counterexamples to Question~\ref{ques} has asymptotic density $0$.'' Theorem~\ref{thm_almostall} achieves this, with a quantitative ``quasi-polynomial'' saving on the size of the exceptional set.

Theorem \ref{thm_main} will be deduced as a consequence of some group-theoretic considerations combined with the following result on products of exactly two primes in short intervals proved in Section \ref{sec:almostall}.

\begin{theorem}[Power-saving exceptional set for products of two primes in short intervals]\label{thm_almostall}
Let $(\log X)^{C}\leq h\leq X^{1/10}$ for large enough $C\geq 1$. Then,  for all integers $1\leq x\leq X$ apart from $\ll Xh^{-c}$ exceptions, there exist $\geq ch/(\log X)$ products of two primes $p_1p_2\in [x,x+h]$ with $h^{1-2c}\leq p_1\leq h^{1-c}$. 
\end{theorem}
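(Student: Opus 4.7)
The plan is to establish Theorem~\ref{thm_almostall} by a second-moment (variance) argument combined with a Dirichlet convolution decomposition of the counting sequence. Set
\[
a_n := \mathbf{1}\bigl[n = p_1 p_2 \text{ for primes } p_1, p_2 \text{ with } h^{1-2c} \leq p_1 \leq h^{1-c}\bigr].
\]
By Mertens' theorem and the prime number theorem, the short-interval sum $\sum_{n \in [x, x+h]} a_n$ has mean $\sim ch/\log X$ averaged over $x \leq X$ (the contribution $\sum_{p_1 \in [h^{1-2c},h^{1-c}]} 1/p_1 \asymp c$ being the source of the constant). It therefore suffices to prove the variance bound
\[
V := \sum_{1 \leq x \leq X} \Bigl| \sum_{n \in [x, x+h]} a_n - M(x) \Bigr|^2 \ll X \Bigl(\frac{h}{\log X}\Bigr)^2 h^{-c'}
\]
for some absolute $c' > 0$ (with $M(x)$ the expected main term), since Chebyshev's inequality then produces at most $\ll X h^{-c'}$ values of $x$ where the count drops below $ch/(2 \log X)$.

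To bound $V$ I would use a Parseval/Gallagher-type identity to convert the variance into an $L^2$ integral of the Dirichlet polynomial $F(s) := \sum_{n \sim X} a_n / n^s$ along the line $\Re(s) = 1$. The key structural feature is the factorisation $a_n = (f * g)(n)$ with $f$ the indicator of primes in $[h^{1-2c}, h^{1-c}]$; after dyadic localisation of $p_1, p_2$, one writes $F$ as a sum of products $F_1 \cdot F_2$, where $F_1$ is a short prime-indicator polynomial of length $\asymp P$ for some dyadic $P \in [h^{1-2c}, h^{1-c}]$ and $F_2$ is the complementary prime-indicator polynomial of length $\asymp X/P$. One then combines pointwise bounds on $F_1(1+it)$ of Vinogradov--Hal\'asz type with the Montgomery--Vaughan mean-value estimate for $\int |F_2(1+it)|^2\,dt$, summed dyadically over $P$ and integrated over the relevant range of $t$, to secure the desired power saving on $V$. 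The upper bound $h \leq X^{1/10}$ should enter precisely to keep $F_2$ long enough relative to the frequency cutoff to admit the mean-value estimate with acceptable losses.

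The chief obstacle is securing a pointwise bound of the form $|F_1(1+it)| \ll (\log h)^{O(1)} h^{-c''}$ with a power-saving exponent $c'' > 0$, uniformly for $t$ of moderate size. For $|t|$ sufficiently large relative to $\log h$ this follows from Vinogradov-type bounds on exponential sums over primes, but for smaller $|t|$ there is no direct cancellation, and one must exploit the averaging afforded by the dyadic range $P \in [h^{1-2c}, h^{1-c}]$ (which supplies $\asymp c \log h$ dyadic scales in $P$) together with a careful extraction of an additional main-term contribution near $t = 0$. Propagating the power saving quantitatively through all contributions, while keeping the small-$|t|$ regime under control without sacrificing more than $h^{-c}$, is the technical heart of the argument; this is also the step where the specific choice of the exponents $1-2c$ and $1-c$ for the range of $p_1$ enters crucially.
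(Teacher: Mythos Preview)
Your overall framework---second-moment bound, Parseval reduction to an $L^2$ mean of a factored Dirichlet polynomial $F = F_1 \cdot F_2$, and a Matom\"aki--Radziwi\l{}\l{}-type analysis---is the same architecture the paper uses. The gap is in the step you yourself flag as the ``chief obstacle'': the pointwise bound $|F_1(1+it)| \ll h^{-c''}$ does \emph{not} follow from Vinogradov-type input, and no such bound is available unconditionally. The best known cancellation in $\sum_{p \sim P} p^{-1-it}$ comes from the Vinogradov--Korobov zero-free region and saves only $\exp\bigl(-c'(\log P)/(\log(|t|+2))^{2/3}(\log\log(|t|+2))^{1/3}\bigr)$; since $|t|$ ranges up to $X/h$ while $P \leq h$, this is at best $\exp(-(\log X)^{1/3+o(1)})$, never a power of $h$. (When $h$ is as small as $(\log X)^C$, the length $P$ is polylogarithmic in $X$ while $|t|$ is nearly $X$, so $|t|$ is not bounded by any power of $P$ and exponential-sum methods give nothing.) The same obstruction blocks power-saving pointwise bounds on the long factor $F_2$. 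Thus neither factor carries the required pointwise saving, and the argument as sketched cannot close.

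The paper's missing ingredient is a \emph{model function}
\[
\widetilde{\Lambda}(n) := 1 - \sum_{\substack{\rho = \beta + i\gamma \\ \beta \geq 1 - 10\varepsilon,\ |\gamma| \leq X^{1.1}}} n^{\rho - 1},
\]
summing over zeta zeros near the $1$-line. One replaces $\Lambda$ by $\Lambda - \widetilde{\Lambda}$ on the long variable $n_2$; the resulting Dirichlet polynomial $\widetilde{P}(s) = \sum_{n_2}(\Lambda - \widetilde{\Lambda})(n_2)\, n_2^{-s}$ now \emph{does} satisfy a genuine power-saving pointwise bound $|\widetilde{P}(1+it)| \ll 1/|t| + X^{-8\varepsilon}$, because the inserted terms $n^{\rho-1}$ exactly cancel the contributions of the offending zeros in the explicit formula. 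The Matom\"aki--Radziwi\l{}\l{} dichotomy then runs cleanly: where $|P_1(1+it)|$ is small one applies the mean-value theorem to $\widetilde{P}$, and where $|P_1(1+it)|$ is large one bounds the number of such $t$ by a large-values estimate and inserts the pointwise bound on $\widetilde{P}$. Passing from $\Lambda - \widetilde{\Lambda}$ back to $\Lambda$ (and comparing the short sum to a long-interval average of length $H = X^{1-10\varepsilon}$ rather than to a fixed main term) costs only $\exp(-(\log X)^{0.33})$ via zero-density estimates near $\sigma = 1$; this loss is absorbed into the main term and is not asked to produce the power saving, which comes entirely from the variance of $\Lambda - \widetilde{\Lambda}$.
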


\begin{remark}
The key aspect of Theorem~\ref{thm_almostall} is the size of the exceptional set. By~\cite[Theorem 1.1]{matomaki-tera-primes} (improving on~\cite{tera-primes}), for $h=(\log X)^{2.1}$ the interval $[x,x+h]$  almost always contains products of two primes, and in fact the exceptional set in that result is power-saving in $h$ in the regime $(\log X)^{2.1}\leq h\leq (\log X)^{C}$ (that is, one has an exceptional set of the size $\ll X/h^{c_0}$ for some constant $c_0>0$). However, in the complementary range $h\geq (\log X)^{\psi(X)}$ with $\psi(X)$ tending to infinity relatively rapidly, the method there does not give such a good exceptional set.

It turns out that we will need Theorem \ref{thm_almostall} only for $h=\exp((\log X)^{1/2}(\log \log X)^{1/2})$, but we give a proof in the larger range $(\log X)^{C}\leq h\leq X^{1/10}$ as it may be of independent interest. 

 We also remark that even under the Riemann hypothesis we are not aware of a proof that there are $\ll Xh^{-1/2-\varepsilon}$ exceptional intervals $[x,x+h]$ with $x\leq X$ not containing a product of two primes, with $\varepsilon>0$ fixed. 
\end{remark}

By  \cite[eq. (1.1)]{shareshian}, our main theorem has the following implication for common prime divisors of binomial coefficients.

\begin{corollary}\label{cor_binomial}
There exists a constant $c>0$ such that, for all $n\leq X$ apart from $\ll X\exp(-c(\log X)^{1/2}(\log \log X)^{1/2})$ exceptions, there exist two primes $p_1,p_2$ (depending on $n$) such that for each $1\leq i\leq n-1$ at least one of $p_1,p_2$ divides $\binom{n}{i}$.
\end{corollary}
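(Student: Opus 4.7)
The plan is to deduce Corollary \ref{cor_binomial} as an immediate consequence of Theorem \ref{thm_main} together with the combinatorial reformulation \cite[eq.~(1.1)]{shareshian} of invariable generation of the alternating group by two elements of prime order.

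Concretely, Theorem \ref{thm_main} supplies, for all but $\ll X\exp(-c(\log X)^{1/2}(\log\log X)^{1/2})$ values of $n\leq X$, two primes $p_1,p_2$ (with $p_1\mid n$ and $p_2>n/\exp(2(\log n)^{1/2}(\log\log n)^{1/2})$) together with elements $x\in A_n$ of order $p_1$ and $y\in A_n$ of order $p_2$ that invariably generate $A_n$. The Shareshian--Woodroofe identity then asserts that such invariable generation by prime-order elements is equivalent to the statement: for every $1\leq i\leq n-1$, at least one of $p_1,p_2$ divides $\binom{n}{i}$. Taking these $p_1,p_2$ as the required pair of primes proves the corollary, and the bound on the exceptional set is inherited unchanged from Theorem \ref{thm_main}.

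The only point that requires comment is why the binomial divisibility condition is the correct combinatorial translation, but this is exactly the content of \cite[eq.~(1.1)]{shareshian}: for elements of prime order with the appropriate cycle structure, failure of invariable generation through the presence of a common invariant subset of size $i$ under some conjugates reduces, via a Kummer/Lucas-type count of orbit sums, to the failure of either $p_1\mid\binom{n}{i}$ or $p_2\mid\binom{n}{i}$; other possible obstructions (imprimitive or almost simple maximal subgroups of $A_n$) are ruled out once $p_2$ is taken to be a large prime, as guaranteed by the second conclusion of Theorem \ref{thm_main}. Since this equivalence is recorded in the cited work, there is no substantial obstacle, and the deduction amounts to a direct quotation.
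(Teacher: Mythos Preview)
Your proposal is correct and takes essentially the same approach as the paper: the paper likewise deduces Corollary~\ref{cor_binomial} directly from Theorem~\ref{thm_main} by citing \cite[eq.~(1.1)]{shareshian}, with no further argument given. One small expository point: your final paragraph sketches why the binomial condition (plus a large prime) yields invariable generation, but the direction actually used here is the reverse---invariable generation by the specific elements supplied in Theorem~\ref{thm_main} (with $p\mid n$ and $r$ large) forces the binomial divisibility for $(p,r)$---though since you invoke the cited equivalence this does not affect correctness.
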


This improves on \cite[Theorem 1.5]{shareshian}, where it was shown that the set of exceptional $n\leq X$ has size $\leq (10^{-28}+o(1))X$. One would expect that there are no exceptional $n$. Let us also mention that assuming very strong information on primes in short intervals, namely  Cram\'er's conjecture, one can show that there are $O(X^{1/2+o(1)})$ exceptional $n\leq X$ both for Corollary \ref{cor_binomial} and Theorem \ref{thm_main} (see \cite[Subsection 4.3]{guralnick}).

\subsection{Acknowledgments} The author thanks Ben Green for bringing the topic of invariable generation to his attention. The author also thanks the referees for helpful comments. The author was supported by Academy of Finland grant no. 340098. 

\section{Notation}\label{subsec:notation} The symbols $p,p_i,r$ always stand for prime numbers.

We denote by $(a,b)$ the greatest common divisor of two natural numbers $a,b$. As usual, $\Lambda$ denotes the von Mangoldt function.

We use the Vinogradov asymptotic notation $A\ll B$ to denote that there exists a constant $C$ such that $|A|\leq CB$.

In the course of the proof, we shall need maximal subgroups of $A_n$.
The maximal subgroups $H$ of $A_n$ are classified into three types:
\begin{itemize}
    \item We say that $H$ is \emph{intransitive} if there exist $i,j\in [n]$ such that under the natural action of $H$ on $[n]$ we have $i\cdot h\neq j$ for all $h\in H$. Otherwise, we say that $H$ is transitive. If $H$ is an intransitive maximal subgroup, then $H$ fixes some set $X\subset [n]$ with $1\leq |X|<n$ under the action of $H$ on $[n]$. 
    
    \item We say that $H$ is \emph{imprimitive} if it is transitive and there is a proper partition $\pi$ of $[n]$ into parts of size $\geq 2$ such that the action of $H$ on $[n]$ permutes these parts. By the maximality of $H$, we may assume that the parts in $\pi$ all have the same size.
    
    \item We say that $H$ is \emph{primitive} if it is transitive but not imprimitive.
\end{itemize}

It is clear that each maximal subgroup must be of one of these three types. 

\section{Group-theoretic lemmas}

The group theory part of our argument can be abstracted into similar ingredients as the arguments in \cite{guralnick}. 

\begin{proposition}\label{thm_characterize}
Let $n\geq 25$ be an integer. Suppose that the following hold for some primes $r>\sqrt{2n}$ and $p\mid  n$ and for $t=\lfloor n/r\rfloor$.
\begin{enumerate}[(i)]
 
    \item $n$ is not a prime power.
    
    \item $n$ is not of the form $(q^d-1)/(q-1)$ for any integers $q\geq 2$ and $d\geq 3$.
    
    \item $n-tr\geq 3$.
    
    \item $(t,n)=1$.
 
    \item $p\nmid ar+b$ for any integers $a,b$ with $0\leq a\leq t$, $0\leq b\leq n-tr$, and $0<ar+b<n$. 
    
\end{enumerate}
Then $A_n$ is invariably generated by an element of order $p$ together with an element of order $r$.
\end{proposition}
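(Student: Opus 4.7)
The plan is to exhibit specific elements of orders $p$ and $r$ and then rule out every maximal overgroup via the standard trichotomy. Take $\sigma\in A_n$ of cycle type $p^{n/p}$ (the product of $n/p$ disjoint $p$-cycles; this lies in $A_n$ because $p\mid n$, checking separately when $p=2$ that $n/p$ is even) and $\tau\in A_n$ of cycle type $r^t 1^{n-tr}$ (automatic since $r$ is odd). Invariable generation of $A_n$ by $\sigma,\tau$ is equivalent to the statement that no maximal subgroup $H\leq A_n$ contains simultaneously an element of cycle type $p^{n/p}$ and one of cycle type $r^t 1^{n-tr}$, so I would split on whether $H$ is intransitive, imprimitive, or primitive.

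The intransitive case is a direct translation of hypothesis~(v): if $H$ stabilises a subset $X\subset[n]$ with $1\leq|X|<n$, then any element of type $r^t 1^{n-tr}$ in $H$ decomposes $X$ into $r$-cycles and fixed points, forcing $|X|=ar+b$ with $0\leq a\leq t$ and $0\leq b\leq n-tr$, while any element of type $p^{n/p}$ forces $p\mid|X|$. These are precisely what~(v) forbids.

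For imprimitive $H$ preserving a partition into $m\geq 2$ blocks of size $k\geq 2$, the inequality $r^2>2n=2mk$ rules out $k\geq r$ and $m\geq r$ simultaneously. I would show that in each subcase no element of cycle type $r^t 1^{n-tr}$ can live in $H$, using only~(iv). When $k<r$, no $r$-cycle of $\tau$ fits in a single block, so the induced action of $\tau$ on blocks has order $r$; a fixed block then consists entirely of fixed points of $\tau$, the remaining blocks come in $r$-orbits, and counting the non-fixed elements gives $k\mid t$. Since $\gcd(t,n)=1$ by~(iv), we have $\gcd(t,k)=1$, forcing $k=1$, a contradiction. When $k\geq r$ (hence $m<r$), the action on the $m$ blocks has order dividing both $r$ and $m!$ and is therefore trivial; then every block contributes exactly $\lfloor k/r\rfloor$ $r$-cycles and $k\bmod r$ fixed points, summing to $m\lfloor k/r\rfloor=t$, so $m\mid t$ and again~(iv) gives $m=1$, a contradiction.

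The primitive case is the most delicate and is where structural group theory really enters. Following the strategy of~\cite{guralnick}, I would invoke a classification of primitive subgroups of $S_n$ containing an element of prime order $r>\sqrt{2n}$ with at least three fixed points. Aside from $A_n$ and $S_n$ themselves, the main possibilities are: affine groups of degree $n=q^d$, ruled out by~(i); almost simple groups with a classical socle acting on projective points of degree $(q^d-1)/(q-1)$, ruled out by~(ii) when $d\geq 3$ and by~(iii) when $d=2$, since prime-order elements of $\mathrm{PSL}_2(q)$ fix at most two points while field-automorphism elements would require $r\leq \log_2 n$, incompatible with $r>\sqrt{2n}$; and a short residual list of subset/product actions and small sporadic cases, handled using $n\geq 25$. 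Working through this final batch is the main obstacle of the proof, since unlike the transitive cases it cannot be reduced to a clean arithmetic contradiction but requires direct appeal to classification theorems for primitive permutation groups.
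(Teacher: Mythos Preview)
Your argument is correct and follows essentially the same strategy as the paper: the same choice of elements, the same intransitive/imprimitive/primitive trichotomy, and the same use of hypotheses (i)--(v) to kill each case. Two small remarks. First, the parenthetical about $p=2$ is unnecessary: taking $a=0$ in (v) forces $p>n-tr\geq 3$, so $p\geq 5$ and $\sigma\in A_n$ automatically. Second, your imprimitive case is argued directly rather than via the ``base-$r$ element'' lemma the paper cites from \cite{guralnick}; your $k<r$ subcase is fine as written, and in the $k\geq r$ subcase the assertion that every block carries exactly $\lfloor k/r\rfloor$ $r$-cycles follows by pigeonhole, since $\sum_B a_B=t=\lfloor mk/r\rfloor\geq m\lfloor k/r\rfloor$ while each $a_B\leq\lfloor k/r\rfloor$, forcing equality throughout and hence $m\mid t$.
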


\begin{proof} We may assume that $p\geq 3$, since if $p=2$ condition (v) cannot hold. Denote $u=n-tr\in [3,r)$. Let $g_1\in A_n$ be a product of $n/p$ disjoint cycles of length $p$, and let $g_2\in A_n$ be a product of $t$ disjoint cycles of length $r$ and $u$ fixed points. Since disjoint cycles commute, $g_1$ has order $p$ and $g_2$ has order $r$. Since conjugation preserves cycle structure, any conjugates of $g_1,g_2$ are still of the same form. Hence, it suffices to show that $\langle g_1,g_2\rangle=A_n$. 

Suppose that $\langle g_1,g_2\rangle\neq A_n$. Let $H\neq A_n$ be the maximal subgroup of $A_n$  that contains $\langle g_1,g_2\rangle$. By the classification of maximal subgroups of $A_n$ (Subsection \ref{subsec:notation}), $H$ must be either primitive, imprimitive or intransitive.

\textbf{Case 1.} Suppose $H$ is primitive. Recall that $H$ contains $g_2$, which is a product of $t$ $r$-cycles and $u$ fixed points. By \cite[Theorem 2.4 and Remark 2.5]{guralnick}, this implies that one of the following holds:
\begin{itemize}

\item $n=\frac{q^d-1}{q-1}$ for some integers $q\geq 2$ and $d\geq 3$;

\item $u\leq 2$;

\item $n$ is a prime power.
\end{itemize}
However, these are all impossible by our assumptions (i), (ii), (iii).

\textbf{Case 2.} Suppose $H$ is imprimitive. Then $H$ preserves some partition $\pi$ of $[n]$ into $d$ parts of size $n/d$ for some $d\mid n$, $1<d<n$. Note that the base $r$ representation of $n$ is $n=tr+u$. Hence, $g_2$ is a \emph{base $r$-element}\footnote{We say that an element $g\in A_n$ is a \emph{base-$p$ element} if, given the base $p$ representation $n=\alpha_0+\alpha_1p+\alpha_2p^2+\cdots$, the element $g$ has $\alpha_0$ fixed points and $\alpha_i$ cycles of length $p^i$ for all $i$.}. By \cite[Lemma 3.6]{guralnick}, the base $r$-element $g_2$ fixing $\pi$ implies that $d\mid(t,u)$ or $n/d\mid (t,u)$. In either case, $(t,u)>1$, so that also $(t,n)=(t,tr+u)>1$, which contradicts our assumption (iv).

\textbf{Case 3.} Lastly, suppose $H$ is intransitive. Then there is a subset $X\subset [n]$ of some size $1<k<n$ such that $H$ is the stabilizer of $X$. Now, $g_1$ fixing $X$ implies that $k=pm$ for some integer $m$, while $g_2$ fixing $X$ implies that $k=ar+b$ for some $0\leq a\leq t$, $0\leq b\leq u$, with $0<ar+b<n$. But by assumption (v) both of these cannot happen.
\end{proof}

Note that Proposition \ref{thm_characterize} does not handle the case of prime (or prime power) $n$; for these we need the following complementary lemma.

\begin{lemma}\label{le_primecase} Let $n\geq 2$. Let $p,r$ be primes with $p\mid n$ and  $r<n-2<n\leq r+p$. Then $A_n$ is invariably generated by an element of order $r$ together with an element of order $p$.

\end{lemma}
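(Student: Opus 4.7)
Following the blueprint of Proposition~\ref{thm_characterize}, I would take $g_1$ to be a single $r$-cycle and $g_2$ a product of $n/p^a$ disjoint $p^a$-cycles, both chosen inside $A_n$; these have orders $r$ and $p^a$ as required. Since conjugation preserves cycle structure, it suffices to show that any conjugates $g_1',g_2'$ still generate $A_n$. The strategy is to assume the contrary and fix a maximal subgroup $H\neq A_n$ containing $\langle g_1',g_2'\rangle$, then rule out the three possibilities for $H$ listed in Subsection~\ref{subsec:notation}.

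The primitive case is immediate from Jordan's classical theorem: $g_1'$ is an $r$-cycle with $r\leq n-3$ (since $r<n-2$), so any primitive subgroup of $S_n$ containing $g_1'$ must contain $A_n$. For the imprimitive case, suppose $H$ preserves a partition into $d$ blocks of common size $s=n/d$ with $1<d<n$. The $r$-cycle $g_1'$ has only $r$ moved points, so it cannot cycle $r$ entire blocks (which would require $\geq rs\geq 2r$ moved points); hence $g_1'$ fixes each block setwise and its $r$-cycle sits inside a single block, giving $s\geq r\geq n-p^a$. Combined with $s\mid n$ and $s\leq n/2$ this forces $p^a\geq n/2$, and with $p^a\mid n$ one concludes $n\in\{p^a,2p^a\}$. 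In the subcase $n=2p^a$ one then gets $s=r=p^a$, hence $a=1$ and $r=p$, which can be ruled out by a direct analysis of how two disjoint $p$-cycles and the $p$-cycle $g_1'$ can sit in two blocks of size $p$. For the intransitive case, if $H$ stabilises $X\subset[n]$ with $1\leq|X|<n$, then since $g_2'$ has no fixed points and each $p^a$-cycle of $g_2'$ lies entirely in $X$ or its complement, $p^a\mid|X|$; the $r$-cycle of $g_1'$ being similarly confined to $X$ or $[n]\setminus X$ gives $r\leq|X|$ or $r\leq n-|X|$. Together with $r\geq n-p^a$ this forces $|X|\in\{p^a,n-p^a\}$, and one then derives the contradiction by freely conjugating $g_1$ so that its $r$-cycle straddles this decomposition.

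The main obstacle I anticipate is the remaining imprimitive subcase $n=p^a$: here $g_2'$ is a single $n$-cycle, whose preserved block systems are exactly the arithmetic-progression partitions $B_j=\{j,j+d,j+2d,\ldots\}$ for $d\mid n$, and the support of an $r$-cycle $g_1'$ can in principle fit inside one such $B_j$ whenever $r\leq p^{a-1}$. Eliminating this configuration appears to need the additional input $r>p^{a-1}$ (equivalently $r>n/p$), which is not literally one of the displayed hypotheses but which holds automatically in the regime where Lemma~\ref{le_primecase} will be invoked in Theorem~\ref{thm_main}: there $r$ is of size at least $n/\exp((\log n)^{1/2}(\log\log n)^{1/2})$, comfortably exceeding $n/p$ for large $n$ and any prime $p\mid n$.
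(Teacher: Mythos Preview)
The paper does not prove this lemma at all: its entire proof is the citation ``This is \cite[Lemma 3.3]{guralnick}.'' So you are attempting considerably more than the paper does, and your difficulties are with the underlying group theory rather than with anything the paper supplies.

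That said, your direct argument has a genuine gap. In the intransitive case you correctly reduce to $|X|\in\{p^a,n-p^a\}$, but the phrase ``one then derives the contradiction by freely conjugating $g_1$ so that its $r$-cycle straddles this decomposition'' is the wrong direction: for \emph{invariable} generation you must show that \emph{every} pair of conjugates generates, so you cannot choose a favourable conjugate of $g_1$. In fact nothing prevents placing the $r$-cycle entirely inside the block of size $n-p^a$ whenever $r\le n-p^a$; combined with $r\ge n-p^a$ this forces $r=n-p^a$, and since $p^a\mid n$ and $r$ is prime one gets $n=2p$, $a=1$, $r=p$. Exactly the same conclusion $n=2p$, $r=p$ arises in your imprimitive subcase $n=2p^a$, and there your promised ``direct analysis'' cannot succeed: for $n=2p$ one can take $g_1'=(1,\dots,p)$ and $g_2'=(1,\dots,p)(p{+}1,\dots,2p)$, both of which lie in the imprimitive (and intransitive) stabiliser of $\{1,\dots,p\}$, so $A_{2p}$ is \emph{not} invariably generated by two elements of order $p$. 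Thus the lemma as transcribed here is literally false at $(n,p,r)=(2p,p,p)$ for odd primes $p$ (e.g.\ $n=6$); presumably the cited statement in \cite{guralnick} carries an extra hypothesis such as $r\neq p$ or $r>n/2$, both of which hold in every application in this paper.

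Your identification of the $n=p^a$ obstacle is accurate: with $g_2$ an $n$-cycle (and $p$ odd so that $g_2\in A_n$), an $r$-cycle with $r\le p^{a-1}$ can be conjugated into a single block of the coarsest partition preserved by $g_2$, so one genuinely needs $r>p^{a-1}$. Note also that for $p=2$, $n=2^a$ there is no element of order $2^a$ in $A_n$ at all, so your choice of $g_2$ is unavailable; again this is irrelevant for the applications, where $p^a=p$ is a large odd prime.
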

\begin{proof}
This follows from \cite[Lemma 3.3]{guralnick} with $a=1$ by taking the permutation $x$ there to be a product of $n/p$ disjoint cycles of length $p$.
\end{proof}

\section{Proof of Theorem \ref{thm_main} assuming Theorem \ref{thm_almostall}}

In this section, we prove  Theorem \ref{thm_main} assuming Theorem \ref{thm_almostall} (which in turn is proved in Section~\ref{sec:almostall}). Throughout this section, let
\begin{align*}
 h:=\exp((\log X)^{1/2}(\log \log X)^{1/2}).
 \end{align*}
By Theorem \ref{thm_almostall}, for all $n\leq X$ outside an admissible exceptional set, there exist $\gg h/(\log X)$ products of two primes 
\begin{align*}
p_1p_2\in [n-h,n-3] \textnormal{ with } h^{1-2c}/2\leq p_1\leq h^{1-c}.    
\end{align*}
By Proposition \ref{thm_characterize} (with $r=p_2$, $t=p_1$) and the union bound, it suffices to show that each of the assumptions (i)--(v) of Proposition \ref{thm_characterize} fails for $\ll X\exp(-c(\log X)^{1/2}(\log \log X)^{1/2})$ integers $n\leq X$. Assumption (iii) is automatically satisfied with our choices. If (i) fails, then $n=p^{a}$ is a prime power, and if $a=1$ then Lemma~\ref{le_primecase} tells us that $A_n$ is generated by an element of order $r$ together with an element of order $p$. There are $\ll X^{1/2}$ integers $n\leq X$ of the form $p^{a}$ with $p$ prime and $a\geq 2$, so this is also an acceptable exceptional set. We are left with showing that the properties (ii), (iv), (v) are true for all but the stated number of exceptional $n$. The smallness of exceptions to assumptions (ii), (iv), (v) will follow from the following five lemmas.

\begin{lemma}[Dealing with very large prime factors]\label{le0}
Suppose that $n$ is large enough and that $p\mid n$ for some prime $p\geq n^{0.9}$. Then, $A_n$ is invariably generated by an element of order $p$ together with an element of prime order $r>n/2$.
\end{lemma}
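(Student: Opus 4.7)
The plan is to exhibit explicit elements $g_1\in A_n$ of order $p$ and $g_2\in A_n$ of prime order $r>n/2$ whose conjugates invariably generate $A_n$. Since $p\geq n^{0.9}$, for $n$ sufficiently large we have $p>n/2$; by Bertrand's postulate (or the prime number theorem) there is also a prime $r$ with $n/2<r\leq n-3$. Take $g_1$ to be a product of $n/p$ disjoint $p$-cycles and $g_2$ to be a single $r$-cycle (fixing the remaining $n-r\geq 3$ points); both lie in $A_n$ because $p$ and $r$ are odd. Since conjugation preserves cycle type it suffices to show $\langle g_1,g_2\rangle=A_n$, so we suppose for contradiction that $H$ is a maximal proper subgroup of $A_n$ containing $\langle g_1,g_2\rangle$ and rule out each of the three types of maximal subgroup from Subsection \ref{subsec:notation}.

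If $H$ is primitive, it contains the $r$-cycle $g_2$ with $r$ prime and $r\leq n-3$, so a classical theorem of Jordan (a primitive subgroup of $S_n$ containing a prime-length cycle that fixes at least three points must contain $A_n$) forces $H\supseteq A_n$, contradicting $H\neq A_n$. If $H$ is imprimitive, it preserves a partition of $[n]$ into $d\geq 2$ blocks of common size $m=n/d\leq n/2$; then the orbits of $g_2$ on the $d$ blocks have length $1$ or $r$, and an orbit of length $r$ would force $d=r$ and $m=n/r<2$, which is impossible, so every block is fixed setwise by $g_2$. But the unique nontrivial cycle of $g_2$, of length $r>n/2\geq m$, cannot fit inside any single block, a contradiction.

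Finally, if $H$ is intransitive then it is the stabilizer of some $X\subset[n]$ with $1\leq|X|\leq n-1$. The orbits of $g_1$ partition $[n]$ into $n/p$ cycles of length $p$, and $g_1\in H$ forces $X$ to be a union of these orbits, so $p$ divides $|X|$. Replacing $X$ by its complement if needed, we may assume $|X|\leq n/2<p$ (for $n$ large), giving $|X|=0$ and contradicting $|X|\geq 1$; when $n=p$ the element $g_1$ is a single $n$-cycle and this case is immediate. The argument is essentially mechanical---the only real content is the observation that $p\geq n^{0.9}>n/2$, which simultaneously collapses the imprimitive block-size constraint and the intransitive divisibility constraint; the one non-trivial external input is Jordan's theorem, invoked in the primitive case.
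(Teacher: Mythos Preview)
Your overall strategy—pick $r$ large, take $g_1$ a product of $p$-cycles and $g_2$ an $r$-cycle, then eliminate the three types of maximal subgroup—is sound, and is essentially what the paper's cited Lemma~\ref{le_primecase} does. But there is a genuine arithmetic error that breaks the intransitive case.

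You assert that ``$p\geq n^{0.9}$, for $n$ sufficiently large we have $p>n/2$'', and later ``$p\geq n^{0.9}>n/2$''. This inequality goes the wrong way: $n^{0.9}>n/2$ is equivalent to $n<2^{10}=1024$, so for large $n$ one has $n^{0.9}<n/2$. Concretely, take $n=3p$ with $p$ a prime exceeding $3^9$; then $p\mid n$, $p\geq n^{0.9}$, yet $p=n/3<n/2$. For such $n$ your intransitive argument fails: the constraint from $g_1$ is only $p\mid |X|$, and the constraint from $g_2$ is $|X|\leq n-r$ or $|X|\geq r$. If you merely pick $r$ by Bertrand with $n/2<r\leq n-3$, then $r$ may well satisfy $r\leq 2n/3$, giving $n-r\geq n/3=p$; then $|X|=p$ (one $p$-cycle of $g_1$, placed on $p$ of the fixed points of $g_2$) exhibits an intransitive maximal subgroup containing suitable conjugates of $g_1,g_2$.

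The fix is exactly what the paper does: one needs $r>n-p$, i.e.\ a prime in $(n-p,\,n-3]\supseteq(n-n^{0.9},\,n-3]$, and Bertrand is not enough for this—one must invoke the prime number theorem in short intervals. Once $r$ is chosen with $n-r<n^{0.9}\leq p$, your case analysis (Jordan for primitive, block-size for imprimitive, divisibility for intransitive) goes through; this is the content of Lemma~\ref{le_primecase} from \cite{guralnick}, which the paper simply quotes.
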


\begin{proof}
By the prime number theorem in short intervals\footnote{One could replace the exponent $0.9$ here by $0.525$ by \cite{baker-harman-pintz}, but the above suffices for our purposes.}, there is a prime $r\in (n-n^{0.9},n-3]\subset (n-p,n-3]$ for all $n\geq N_0$. Hence, the claim follows from Lemma \ref{le_primecase}.
\end{proof}

\begin{lemma}[Exceptions to assumption (ii)]\label{le1}
The number of $n\leq X$ that are of the form $(q^d-1)/(q-1)$ with $q\geq 2$ and $d\geq 3$ is $\ll \sqrt{X}$.
\end{lemma}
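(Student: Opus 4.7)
The plan is to parametrize the relevant $n$ by the pair $(q,d)$ and bound the count by fixing $d$ and estimating the number of admissible $q$. First I would use the elementary identity $n=(q^d-1)/(q-1)=1+q+q^2+\cdots+q^{d-1}\geq q^{d-1}$, so that the constraint $n\leq X$ forces $q\leq X^{1/(d-1)}$. Since each pair $(q,d)$ determines at most one value of $n$, this bounds the number of $n\leq X$ coming from a fixed $d\geq 3$ by $X^{1/(d-1)}$.

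Next I would sum over $d$. Since $q\geq 2$, we need $2^{d-1}\leq X$, so $d\leq 1+\log_2 X$, and the total count is at most
\[\sum_{d=3}^{1+\log_2 X} X^{1/(d-1)}.\]
The $d=3$ term contributes $X^{1/2}$, and each of the $O(\log X)$ remaining terms is at most $X^{1/3}$, contributing in total $\ll X^{1/3}\log X=o(X^{1/2})$. Combining these gives the desired bound $\ll\sqrt{X}$.

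There is no serious obstacle: the argument is an elementary counting exercise, driven entirely by the observation that $(q^d-1)/(q-1)\geq q^{d-1}$ provides enough polynomial decay in $q$ as $d$ grows for the sum to be dominated by the $d=3$ contribution (i.e.\ by numbers of the shape $1+q+q^2$). Distinct pairs $(q,d)$ could in principle yield the same $n$, but since we only require an upper bound, such overcounting is harmless.
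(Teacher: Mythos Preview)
Your argument is correct and essentially identical to the paper's own proof: both bound the count for fixed $d$ by $X^{1/(d-1)}$ via $n\geq q^{d-1}$, note that $d\ll \log X$, and observe that the $d=3$ term dominates. There is nothing to add.
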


\begin{proof}
The number of $n\leq X$ of the form $(q^3-1)/(q-1)=1+q+q^2$ is trivially $\ll \sqrt{X}$. Similarly, for any $d\geq 4$, the number of $n$ of the form $(q^d-1)/(q-1)$ is $\ll X^{1/(d-1)}\ll X^{1/3}$. Since necessarily $d\leq (\log X)/(\log 2)$, the claim follows.
\end{proof}

\begin{lemma}[Exceptions to assumption (iv)]\label{le2}
Let $n\in [X^{1/2},X]$. The number of products of two primes $p_1p_2\in [n-h,n]$ with $h^{1-2c}/2\leq p_1\leq h^{1-c}$ and $p_1\mid n$ is $o(h/(\log X))$. 
\end{lemma}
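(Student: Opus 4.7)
The plan is to exploit the fact that the divisibility condition $p_1 \mid n$ drastically restricts the number of admissible small primes $p_1$, and then to observe that for each fixed $p_1$ the number of admissible $p_2$ is confined to a very short interval.

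First, I would count the admissible $p_1$'s. Since $p_1 \mid n$, $p_1 \geq h^{1-2c}/2$, and $n \leq X$, the number of such prime divisors of $n$ is at most
\[
\frac{\log n}{\log(h^{1-2c}/2)} \ll \frac{\log X}{\log h}.
\]
Recalling that $h = \exp((\log X)^{1/2}(\log \log X)^{1/2})$, we have $\log h = (\log X)^{1/2}(\log \log X)^{1/2}$, so this bound is $\ll ((\log X)/(\log \log X))^{1/2}$.

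Second, I would fix such a $p_1$ and count the admissible $p_2$. The condition $p_1 p_2 \in [n-h, n]$ is equivalent to $p_2 \in [(n-h)/p_1,\, n/p_1]$, an interval of length $h/p_1 \leq 2 h^{2c}$. A trivial interval bound (or the Brun--Titchmarsh inequality, which would save an additional $\log h$ that we do not need) shows that this interval contains $\ll h^{2c}$ primes.

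Multiplying the two bounds, the total number of pairs $(p_1,p_2)$ in question is
\[
\ll h^{2c}\left(\frac{\log X}{\log \log X}\right)^{1/2}.
\]
To conclude, I would check that this is $o(h/\log X)$. Rearranging, this amounts to showing that $h^{1-2c}$ grows faster than $(\log X)^{3/2}(\log \log X)^{-1/2}$, which is obvious since $h^{1-2c} = \exp((1-2c)(\log X)^{1/2}(\log \log X)^{1/2})$ is quasi-polynomial in $\log X$. There is no substantive obstacle: the lemma is essentially a counting argument, with the only mildly subtle point being the bookkeeping ensuring the final bound beats the target $h/\log X$ by a quasi-polynomial margin.
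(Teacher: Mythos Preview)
Your proof is correct and follows essentially the same approach as the paper: bound the number of admissible $p_2$ by the interval length $h/p_1 \ll h^{2c}$, and bound the number of admissible $p_1 \mid n$ by a divisor count. The only difference is that you use the slightly sharper bound $\ll (\log X)/(\log h)$ for the number of prime divisors of $n$ exceeding $h^{1-2c}/2$, whereas the paper uses the cruder $\ll \log X$; both suffice since $h^{1-2c}$ dominates any power of $\log X$.
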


\begin{proof}
We can very crudely bound
\[\pushQED{\qed} 
\sum_{\substack{h^{1-2c}/2\leq p_1\leq h^{1-c}\\p_1\mid n}}\,\,\sum_{(n-h)/p_1\leq p_2\leq n/p_1}1\ll \sum_{\substack{p_1\geq h^{1-2c}/2\\p_1\mid n}}\frac{h}{p_1}\ll \frac{h(\log X)}{h^{1-2c}}=o\left(\frac{h}{\log X}\right).
\qedhere  
\]
\end{proof}

\begin{lemma}[Exceptions to assumption (v) with large prime divisor]\label{le3}
For all but $\ll X/h^{1/2}$ integers $n\leq X$ the following holds. 

The number of products of two primes $p_1p_2\in [n-h,n]$ with $h^{1-2c}/2\leq p_1\leq h^{1-c}$ that satisfy $p\mid ap_2+b$ for some prime $p\mid n$, $h^3\leq p\leq X^{0.9}$ and some $0\leq a\leq p_1$, $0\leq b\leq h$ with $0<ap_2+b<n$ is $o(h/(\log X))$. 
\end{lemma}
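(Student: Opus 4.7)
The plan is to bound $T := \sum_{n\le X} T_n$ — where $T_n$ counts bad quintuples $(p, p_1, p_2, a, b)$ for $n$ — by $\ll Xh^{1/2}/\log X$, after which Markov's inequality supplies the desired $\ll X/h^{1/2}$ exceptional set. The first key step is a Diophantine reduction that exploits $p\ge h^3$. Writing $n = p_1 p_2 + k$ with $k\in[3,h]$, the conditions $p\mid n$ and $p\mid ap_2+b$ jointly give
\[
 p \,\Bigm|\, p_1(ap_2+b) - a(p_1 p_2 + k) \;=\; p_1 b - ak.
\]
Since $|p_1 b - ak| \le 2 p_1 h \le 2 h^{2-c} < h^3 \le p$, this forces the exact equality $p_1 b = ak$ in $\mathbb Z$, independently of both $p$ and $p_2$. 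Thus every bad quintuple corresponds to a solution $(p_1, a, b, k)$ of $p_1 b = ak$ with $p_1\in I_1$ prime, $a\in[1,p_1]$, $b\in[1,h]$, $k\in[3,h]$, together with primes $p\in[h^3, X^{0.9}]$ and $p_2\le X/p_1$ satisfying the single congruence $a p_2 \equiv -b \pmod p$.

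Next I would enumerate the Diophantine solutions. Splitting on the cases $a = p_1$ (forcing $b = k$, giving $h-2$ solutions) and $a < p_1$ (where $\gcd(a,p_1)=1$ forces $p_1\mid k$, yielding further solutions of the form $k = p_1\ell$, $b = a\ell$ with $\ell\in[1, h/p_1]$ and $a\in[1,p_1]$) produces $O(h)$ solutions per fixed $p_1\in I_1$. For each Diophantine solution, Brun--Titchmarsh bounds the primes $p_2\le X/p_1$ in the class $p_2\equiv -b/a\pmod p$ by $\ll X/(p_1 p\log X)$, and summing over $p\in[h^3, X^{0.9}]$ using $\sum_p 1/p\ll\log\log X$ gives $\ll X\log\log X/(p_1\log X)$ choices of $(p,p_2)$ per solution. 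Assembling these via Mertens ($\sum_{p_1\in I_1}1/p_1\ll\log h$) yields the preliminary bound
\[
 T \;\ll\; \frac{X h\log h\,\log\log X}{\log X}.
\]

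This naive bound falls short of the target $Xh^{1/2}/\log X$ by roughly a factor of $h^{1/2}$, and the hard part will be extracting this additional square-root saving. I see two plausible routes. Route (i) is to sharpen the prime-pair count by using that $p$ must divide both $a p_2 + b$ and $p_1 p_2 + k$; although these are logically equivalent once $p_1 b = ak$ is imposed, the combined constraint admits a second-moment estimate over pairs of distinct primes $p\ne p'$ in $[h^3, X^{0.9}]$ (or equivalently a Selberg-type sieve applied jointly to the two linear forms in $p_2$), which should yield the square-root gain. Route (ii) is to observe that the $O(h)$ bound on Diophantine solutions is not tight on average over $p_1$: applying Dirichlet's hyperbola method to the divisor structure of $p_1 b = ak$ should bring the average count down to $O(h^{1/2+o(1)})$. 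Either way, a Cauchy--Schwarz step of the kind characteristic of square-root-cancellation arguments should produce $T \ll Xh^{1/2}/\log X$, completing the proof after Markov's inequality.
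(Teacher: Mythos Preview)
Your Diophantine reduction is correct and is in fact the heart of the paper's argument too: from $p\mid n$ and $p\mid ap_2+b$ with $n=p_1p_2+k$ one gets $p\mid p_1b-ak$, and since $p\ge h^3>|p_1b-ak|$ this forces $p_1b=ak$, hence (for $1\le a<p_1$) $p_1\mid k$ and therefore $p_1\mid n$. The gap is entirely in the bookkeeping after this point.

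By summing over \emph{quintuples} $(p_1,p_2,a,b,p)$ and then, for each Diophantine solution $(p_1,a,b,k)$, counting primes $p_2$ in a residue class modulo $p$ via Brun--Titchmarsh, you are massively overcounting: for each pair $(p_1,p_2)$ there can be up to $p_1\sim h^{1-c}$ witnesses $(a,b)$, and the sum over $p$ is also wasteful since $p$ is already pinned down by $p\mid n$. Your two proposed rescues do not work. Route~(ii) fails because the number of $(a,b,k)$ with $p_1b=ak$ for fixed $p_1$ really is $\asymp h$, not $h^{1/2+o(1)}$: there are $\sim h/p_1$ choices of $k=p_1\ell$, and for each, $\sim p_1$ choices of $a$ (with $b=a\ell$). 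Route~(i) fails because, as you yourself note, once $p_1b=ak$ holds the two congruences $p\mid n$ and $p\mid ap_2+b$ are \emph{equivalent}, so there is no second independent constraint from which a Cauchy--Schwarz or Selberg-sieve argument could extract cancellation.

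The saving you are missing is already present in your reduction and is much larger than $h^{1/2}$: the conclusion $p_1\mid n$ cuts the number of admissible $n\le X$ by a factor $1/p_1$, and for each such $n$ there are at most $h/p_1+1$ choices of $p_2$. So if $N_n$ is the number of bad \emph{pairs} $(p_1,p_2)$ (which is what the lemma actually asks for), then
\[
\sum_{n\le X} N_n \;\le\; \sum_{p_1\in I_1}\;\sum_{\substack{n\le X\\ p_1\mid n}}\Big(\frac{h}{p_1}+1\Big)\;\ll\; Xh\sum_{p_1\in I_1}\frac{1}{p_1^{2}}\;\ll\; \frac{Xh}{h^{1-2c}}\;=\;Xh^{2c},
\]
and Markov with threshold $h/(\log X)^2$ gives an exceptional set of size $\ll X(\log X)^2/h^{1-2c}\ll X/h^{1/2}$ for $c$ small. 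The paper carries out essentially this computation, organised slightly differently: it fixes $p$ first, writes $n=pm$, shows the analogue $p_1\mid m$, and then sums $\sum_m\sum_{p_1}(h/p_1)1_{p_1\mid m}\ll (X/p)\sum_{p_1}1/p_1^2$, followed by a union bound over the $\ll(\log X)/(\log h)$ primes $p\mid n$ in range.
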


\begin{proof}
Suppose $p\mid ap_2+b$ with $0\leq a\leq p_1$ and $0\leq b\leq h$. If $a=p_1$, then $p\mid n$, $p\mid ap_2+b$ implies $p\mid n-p_1p_2-b\in (0,h]$. But as $p>h$, this is not possible. Similarly, if $a=0$, then $p\mid b\in [1,h]$, which contradicts $p>h$. Now, denoting
\begin{align*}
\mathcal{S}_p:=\{j\in \mathbb{Z}:\,\, aj+b\equiv 0\pmod p\textnormal{ for some } 1\leq a<p_1,0\leq b\leq h\},    
\end{align*}
we can write the condition $p\mid ap_2+b$ with $a,b$ as above in the form $p_2\in \mathcal{S}_p$. It then suffices to show, for every $h^3\leq p\leq X^{0.9}$, that
\begin{align*}
\sum_{\substack{n-h\leq p_1p_2\leq n\\p_2\in \mathcal{S}_p}}1\leq \frac{h}{(\log X)^2}    
\end{align*}
holds for all but $\ll X/(h^{0.9}p)$ integers $n\leq X$, $n\equiv 0\pmod p$. Indeed, once we have this, the claim follows from the union bound and the fact that any $n\leq X$ has $\ll (\log X)/(\log h)$ prime factors $p>h^3$.

Using the inequality $|\{n\leq X:b_n\geq \lambda\}|\leq \lambda^{-1}\sum_{n\leq X}b_n$ with $b_n\geq 0$, we can estimate
\begin{align}\label{eq6}
&\Big|\Big\{n\leq X:\,\, p\mid n\,\textnormal{ and }  \sum_{\substack{n-h\leq p_1p_2\leq n\\h^{1-2c}/2\leq p_1\leq h^{1-c}\\p_2\in \mathcal{S}_p}}1\geq \frac{h}{(\log X)^2}\Big\}\Big|\nonumber\\
&\leq \frac{(\log X)^2}{h}\sum_{m\leq X/p}\,\,\sum_{h^{1-2c}/2\leq p_1\leq h^{1-c}}\,\,\sum_{\substack{(pm-h)/p_1\leq \ell \leq pm/p_1\\\ell\in \mathcal{S}_p}}1.
\end{align}
Note then that $\ell\in \mathcal{S}_p$ for some $\ell\in [(pm-h)/p_1,pm/p_1]$ implies that for some $1\leq a<p_1$ we have
\begin{align*}
\frac{apm}{p_1}\in [-2h,2h]\pmod p.    
\end{align*}
Therefore, we have
\begin{align*}
\frac{am}{p_1}\in \left[-\frac{2h}{p},\frac{2h}{p}\right]\pmod 1.    
\end{align*}
But if $p_1\nmid am$, then by denoting by $\|\cdot\|$ the distance to the nearest integer, we have
\begin{align*}
\left\|\frac{am}{p_1}\right\|\geq \frac{1}{p_1}>\frac{2h}{p},    
\end{align*}
since $p\geq h^3$ and $p_1\leq h^{1-c}$. We must therefore have $p_1\mid am$, so $p_1\mid m$. Hence, \eqref{eq6} is bounded by
\begin{align*}
\\
&\ll \frac{(\log X)^2}{h}\sum_{m\leq X/p}\,\,\sum_{h^{1-2c}/2\leq p_1\leq h^{1-c}}\frac{h}{p_1}1_{p_1\mid m}\\
&\ll (\log X)^2\sum_{h^{1-2c}/2\leq p_1\leq h^{1-c}}\frac{X}{pp_1^2}\\
&\ll \frac{X}{ph^{0.9}},
\end{align*}
recalling that $p_1p\ll X^{0.9+o(1)}$ and $p_1\geq h^{1-2c}\geq h^{0.9}(\log X)^2$ if we take $c<1/25$. As noted before, this was enough to conclude the proof.
\end{proof}

\begin{lemma}[Bounding the number of smooth numbers]\label{le4}
The number of $n\leq X$ that have no prime factors larger than $h^3$ is $\ll X\exp(-c(\log X)^{1/2}(\log \log X))$.
\end{lemma}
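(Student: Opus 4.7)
The plan is to invoke the standard bound for counts of smooth numbers. Writing $\Psi(X,y)$ for the number of $y$-smooth integers in $[1,X]$, the object to bound is $\Psi(X,h^3)$.

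The natural approach is Rankin's trick: for any $0<\sigma<1$,
$$\Psi(X,y)\leq X^\sigma \prod_{p\leq y}\frac{1}{1-p^{-\sigma}}.$$
Taking logarithms, estimating the Euler product via Mertens' theorem, and optimizing $\sigma$ against $\log X$ yields the classical de Bruijn-type bound
$$\Psi(X,y)\ll X\exp\bigl(-u\log u\,(1+o(1))\bigr),\qquad u:=\frac{\log X}{\log y},$$
valid whenever $u\to\infty$ and $\log y=o(\log X)$. (Equivalently, one may simply quote the Canfield--Erd\H{o}s--Pomerance theorem, which applies comfortably in our parameter range.)

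Next I would substitute $y=h^3$, giving $\log y = 3(\log X)^{1/2}(\log\log X)^{1/2}$, hence $u = \tfrac13\bigl((\log X)/\log\log X\bigr)^{1/2}$ and $\log u = (\tfrac12+o(1))\log\log X$; therefore $u\log u = (\tfrac16+o(1))(\log X)^{1/2}(\log\log X)^{1/2}$. Feeding this back into the displayed estimate produces
$$\Psi(X,h^3)\ll X\exp\bigl(-c(\log X)^{1/2}(\log\log X)^{1/2}\bigr)$$
for some $c>0$, which matches the saving asserted in the main theorem and suffices for the applications made in the previous section. There is no real obstacle: this is a textbook application of smooth-number bounds, and the only routine check is that $(X,h^3)$ sits in the valid range of the cited estimate, which is immediate from $u\to\infty$ and $\log y=o(\log X)$.
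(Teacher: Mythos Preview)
Your proof is correct and follows essentially the same route as the paper's: both invoke the standard smooth-number bound $\Psi(X,y)\ll X\,u^{-(1+o(1))u}$ with $u=(\log X)/\log y$ and plug in $y=h^3$ to obtain the saving $\exp(-c(\log X)^{1/2}(\log\log X)^{1/2})$. Note that the exponent $(\log\log X)$ appearing in the lemma statement is a typo for $(\log\log X)^{1/2}$; your computation (and the paper's own proof) yields the latter, which is what is actually needed for the main theorem.
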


\begin{proof}
Let $s=(\log X)/(\log h^3)=(\log X)^{1/2}/(3(\log \log X)^{1/2})$. Then, by a standard smooth number estimate (see e.g. \cite[Corollary 1.3]{ht}), the number of $h^3$-smooth integers up to $X$ is
\begin{align*}
\ll  Xs^{-(1+o(1))s}\ll X\exp(-c(\log X)^{1/2}(\log \log X)^{1/2}),    
\end{align*}
provided we take $c< 1/6$.
\end{proof}

Combining Lemmas \ref{le0}, \ref{le1}, \ref{le2}, \ref{le3} and \ref{le4}, Theorem \ref{thm_main} follows (assuming still Theorem~\ref{thm_almostall}). 

\begin{remark}
Note that the size of our exceptional set arose essentially from solving the equation $h^{-c}=s^{-s}$ with $s=(\log X)/(\log h)$ and $c\asymp 1$. Since it does not seem easy to obtain a saving larger than $h^{-O(1)}$ for the size of the exceptional set in Theorem \ref{thm_almostall} even under the Riemann hypothesis, it seems that a new idea would be required to improve on the size of our exceptional set in Theorem~\ref{thm_main}.
\end{remark}

\section{Proof of Theorem \ref{thm_almostall}}\label{sec:almostall}

Throughout this section, let $\varepsilon>0$ be a small enough absolute constant. We can restate Theorem \ref{thm_almostall} in the following quantitative form.

\begin{theorem}\label{thm_almostall2} Let $(\log x)^{C}\leq h\leq X^{1/10}$ for large enough $C\geq 1$. Then for all integers $1\leq x\leq X$ apart from $\ll Xh^{-\varepsilon}$ exceptions we have
\begin{align}\label{eq1}
\sum_{\substack{x\leq n_1n_2\leq x+h\\h^{1-2\varepsilon}\leq n_1\leq h^{1-\varepsilon}}}\Lambda(n_1)\Lambda(n_2)=h\sum_{h^{1-2\varepsilon}\leq n_1\leq h^{1-\varepsilon}}\frac{\Lambda(n_1)}{n_1}+O(h(\log X)^{-100}).  
\end{align}
\end{theorem}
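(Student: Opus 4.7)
The plan is to apply the standard $L^2$-mean-square framework for almost-all short-interval problems, reducing to estimates for mean values and large values of prime-supported Dirichlet polynomials. Write $S(x)$ and $M(x)$ for the two sides of \eqref{eq1}. By Chebyshev's inequality, it suffices to establish
\[
\int_X^{2X} |S(x) - M(x)|^2\, dx \ll X h^{2-\varepsilon}(\log X)^{-200}.
\]
A Saffari--Vaughan-type Parseval identity, combined with a dyadic decomposition in the long prime variable $n_2$, reduces this to a bound of the form
\[
\sup_{X/h \le T \le X^{1+o(1)}} \frac{h^2}{T}\int_T^{2T} |F_1(1+it) F_2(1+it)|^2\, dt \ll h^{2-\varepsilon}(\log X)^{-300},
\]
where $F_1(s) = \sum_{P_1^{1-2\varepsilon}\le p\le P_1^{1-\varepsilon}}(\log p)p^{-s}$ has length $\asymp h$ and $F_2(s)$ is a dyadically localised prime polynomial of length $\asymp X/h$. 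The contribution of very small $|t|$ (say $|t| \le h^{o(1)}$) is handled separately using the classical zero-free region for $\zeta$, which ensures $F_1(1+it)$ is close to its main term there, so that the contribution cancels against $M(x)$.

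The heart of the argument would be a pointwise/large-values dichotomy for $F_1$. Fix a threshold $V_0 := h^{-\delta}$ for a small constant $\delta>0$, and split $[T,2T]$ into the generic set $\mathcal{G}=\{t:|F_1(1+it)|\le V_0\}$ and its complement $\mathcal{E}$. On $\mathcal{G}$ use the pointwise bound $|F_1(1+it)|^2 \le h^{-2\delta}$ together with the Montgomery--Vaughan mean value theorem applied to $|F_2(1+it)|^2$ over $[T,2T]$; a short computation shows this contribution is acceptable. On $\mathcal{E}$ a Huxley (or Heath--Brown) large-values estimate---nontrivial because $F_1$ has length $\ge h^{1-2\varepsilon} \ge (\log X)^{C/2}$---bounds $|\mathcal{E}| \ll T h^{-A\delta}$ for some $A>1$ depending on the moment chosen; applying Cauchy--Schwarz together with a fourth-moment bound for $F_1 F_2$ (handled via Montgomery--Vaughan applied to the squared convolution polynomial) then absorbs the exceptional contribution. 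Choosing $\delta$ small in terms of $\varepsilon$ balances both terms.

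The main obstacle is extracting a genuine \emph{power} saving $h^{-\varepsilon}$ rather than a mere $(\log X)^{-A}$ saving; the latter would already give density-zero exceptional sets but is insufficient for the quantitative form sought. This is exactly the role of the Huxley-type large-values estimate for $F_1$, which requires $F_1$ to have length that is at least a sufficiently large power of $\log X$---the content of the hypothesis $h \ge (\log X)^C$. The upper constraint $h \le X^{1/10}$ ensures in turn that $X/h \ge X^{9/10}$, so that the integration range for $T$ is long enough to make the mean-value theorem for $F_2$ essentially sharp, and so that the polynomial $F_2$ is genuinely longer than $F_1$. Balancing all these constraints yields the claimed power-saving exceptional set.
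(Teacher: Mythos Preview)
Your overall framework (variance bound, Parseval reduction, Matom\"aki--Radziwi\l{}\l{} dichotomy on $|F_1|$) matches the paper's, but the treatment of the exceptional set $\mathcal{E}=\{|F_1(1+it)|>h^{-\delta}\}$ has a genuine gap, and this is precisely the difficulty the paper is built around.

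On $\mathcal{E}$ the correct large-values input (raising $F_1$ to a high power and applying the mean value theorem) gives only $|\mathcal{E}|\ll X^{O(\varepsilon)}$, a small \emph{positive} power of $X$; Huxley's theorem does not give $|\mathcal{E}|\ll Th^{-A\delta}$ here, because the length $h$ of $F_1$ is tiny compared with $T\asymp X/h$. With $|\mathcal{E}|\ll X^{O(\varepsilon)}$, your Cauchy--Schwarz plus fourth-moment step yields
\[
\int_{\mathcal{E}}|F_1F_2|^2\,dt\;\le\;|\mathcal{E}|^{1/2}\Bigl(\int_0^{X/h}|F_1F_2|^4\,dt\Bigr)^{1/2}\ll X^{O(\varepsilon)}(\log X)^{O(1)},
\]
since $(F_1F_2)^2$ has length $\asymp X^2$ and the mean value theorem gives only $(\log X)^{O(1)}$ for the fourth moment. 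This is a positive power of $X$, not $h^{-\varepsilon}$; it fails even for a single well-spaced point in $\mathcal{E}$. Replacing the fourth moment by a discrete mean value of $F_2$ over $\mathcal{E}$, or by the best unconditional pointwise bound $|F_2(1+it)|\ll\exp(-c(\log X)^{3/5-o(1)})$ coming from Vinogradov--Korobov, still leaves a factor $X^{O(\varepsilon)}$ that cannot be absorbed. (Relatedly, your displayed Parseval reduction with the weight $h^2/T$ is too generous: with that target the bare mean value theorem already succeeds, so the stated inequality cannot be the one that captures the problem. The genuine reduction, as in the paper, produces an unweighted integral $\int_{h^{10\varepsilon}}^{X/h}|F|^2\,dt$ whose target is of size $h^{-O(\varepsilon)}$.)

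The paper's way around this is the point you are missing: it introduces a model $\widetilde{\Lambda}(n)=1-\sum_{\rho:\,\beta\ge 1-10\varepsilon,\,|\gamma|\le X^{1.1}}n^{\rho-1}$ and replaces $F_2$ by $\widetilde P(s)=\sum(\Lambda-\widetilde{\Lambda})(n)n^{-s}$. Because the zeros with $\beta\ge 1-10\varepsilon$ have been subtracted off, one obtains the \emph{power-saving} pointwise bound $|\widetilde P(1+it)|\ll 1/|t|+X^{-8\varepsilon}$ (Lemma~\ref{le_pointwise}). Then on $\mathcal{E}$ one simply uses $|\mathcal{E}|\ll X^{11\varepsilon}$ together with $|\widetilde P|^2\ll X^{-16\varepsilon}$ to get $X^{-5\varepsilon}$, which is acceptable. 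The short-interval sums involving $\widetilde{\Lambda}$ are then shown to be negligible via a zero-density estimate (Lemma~\ref{le_bound}), and the long-interval comparison uses the prime number theorem as you suggested. In short, the obstacle you identify is real, but its resolution lies not in the large-values estimate for $F_1$ but in engineering a power-saving pointwise bound for the long prime polynomial; without the model function (or an equivalent device) your argument cannot deliver $h^{-\varepsilon}$.
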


By Mertens's theorem, we have
\begin{align*}
 \sum_{h^{1-2\varepsilon}\leq n_1\leq h^{1-\varepsilon}}\frac{\Lambda(n_1)}{n_1}=\left(\log \frac{1-\varepsilon}{1-2\varepsilon}+o(1)\right)\log h  
\end{align*}
and $\log((1-\varepsilon)/(1-2\varepsilon))>\varepsilon$ for $\varepsilon\in (0,1/2)$. Hence, noting that $\Lambda(n_1)\leq \log h, \Lambda(n_2)\leq \log X+1$ and trivially bounding the contribution of the higher prime powers to $\Lambda$, we see that Theorem \ref{thm_almostall2} directly implies Theorem \ref{thm_almostall} with $c=\varepsilon$.

We will prove Theorem \ref{thm_almostall2} via the method of Dirichlet polynomials.
The main hurdle in the proof is that we do not know a zero-free strip of constant width for the Riemann zeta function. Given our current knowledge on the zero-free region of the Riemann zeta function (i.e., the Vinogradov--Korobov zero-free region), we cannot hope to have an error term better than $h\exp(-(\log X)^{1/3+o(1)})$ on the right of \eqref{eq1}. Hence, \eqref{eq1} cannot be directly converted into a variance estimate that we could hope to unconditionally prove. This issue is amended by defining a \emph{model function} $\widetilde{\Lambda}$  for the von Mangoldt function such that $\widetilde{\Lambda}$  ``resonates'' with the zeros of the Riemann zeta function of large real part in exactly the same way as the von Mangoldt function itself, and therefore the Dirichlet polynomial of $\Lambda-\widetilde{\Lambda}$ satisfies power-saving Dirichlet polynomial bounds. More precisely, we define $\widetilde{\Lambda}$ as follows.

\begin{definition}[A model for the von Mangoldt function] For a given $X\geq 2$, define 
\begin{align*}
\widetilde{\Lambda}(n):=1-\sum_{\substack{\rho=\beta+i\gamma\\\beta\geq 1-10\varepsilon\\|\gamma|\leq X^{1.1}}}n^{\rho-1},    
\end{align*}
where the sum is over the nontrivial zeros $\rho$ of the Riemann zeta function.  
\end{definition}

We have the following lemma on the size of the model function $\widetilde{\Lambda}(n)$.

\begin{lemma}\label{le_bound}
For $n\in [X^{0.1},2X]$, we have $|\widetilde{\Lambda}(n)-1|\ll \exp(-(\log X)^{0.33})$.
\end{lemma}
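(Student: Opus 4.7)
My plan is to bound $\widetilde{\Lambda}(n)-1 = -\sum_\rho n^{\rho-1}$ directly from the definition, where the sum ranges over the nontrivial zeros $\rho=\beta+i\gamma$ of $\zeta$ with $\beta\geq 1-10\varepsilon$ and $|\gamma|\leq X^{1.1}$. The two standard ingredients I would combine are the Vinogradov--Korobov zero-free region and a matching Vinogradov-type zero density estimate exploiting it.

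First, I would invoke the Vinogradov--Korobov zero-free region: every zero with $|\gamma|\leq X^{1.1}$ satisfies
\begin{align*}
\beta \leq 1 - \frac{c_0}{(\log X)^{2/3}(\log\log X)^{1/3}}
\end{align*}
for some absolute $c_0>0$. Writing $u = 1-\beta$, for $n\in[X^{0.1},2X]$ each summand is pointwise small:
\begin{align*}
|n^{\rho-1}| = n^{-u} \leq \exp(-0.1\, u \log X).
\end{align*}
To count the zeros I would then use a Vinogradov-type density estimate of the form $N(\sigma,T) \ll T^{A(1-\sigma)^{3/2}}(\log T)^{C}$ valid for $\sigma$ sufficiently close to $1$ (the density analogue of V--K, following from combining Vinogradov's mean-value estimates for $\zeta$ with Hal\'asz's method; see e.g.\ Ivi\'c or Titchmarsh, Chapter~9). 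The classical Ingham/Huxley bounds $N(\sigma,T)\ll T^{A(1-\sigma)}$ would \emph{not} suffice here, since they would contribute a factor $X^{O(\varepsilon)}$ which overwhelms the per-zero saving.

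With these in hand, I would dyadically partition the range of $u$ into $O(\log\log X)$ intervals $[2^j u_0, 2^{j+1}u_0]$, with $u_0 := c_0/((\log X)^{2/3}(\log\log X)^{1/3})$ and $2^j u_0 \leq 10\varepsilon$, bounding the number of zeros in each interval by $N(1-2^{j+1}u_0,X^{1.1})$. The $j$-th range then contributes at most
\begin{align*}
\exp\Bigl(\log X\cdot 2^j u_0 \bigl(-0.1 + C_1 A (2^j u_0)^{1/2}\bigr)\Bigr)(\log X)^{C}.
\end{align*}
Taking $\varepsilon$ small enough that $\sqrt{10\varepsilon} < 0.05/(C_1 A)$, the bracketed factor is uniformly $\leq -0.05$ across $j$, so the geometric sum is dominated by the $j=0$ term, yielding
\begin{align*}
|\widetilde{\Lambda}(n)-1| \ll \exp\bigl(-c_2 (\log X)^{1/3}(\log\log X)^{-1/3}\bigr) \ll \exp(-(\log X)^{0.33}),
\end{align*}
the last step using that $1/3 > 0.33$ leaves room to absorb the polylogarithmic factors. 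The main obstacle is securing the cubic-exponent zero density estimate in the correct range of $\sigma$; once that ingredient is cited, the remainder is a careful calibration of the dyadic sum and the constant $\varepsilon$.
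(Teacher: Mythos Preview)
Your proposal is correct and follows essentially the same approach as the paper's own proof: both combine the Vinogradov--Korobov zero-free region with the cubic-exponent zero density estimate $N(\sigma,T)\ll T^{A(1-\sigma)^{3/2}}$ (the paper cites Ford for this), and both partition in $\beta$ (the paper uses intervals of length $1/\log X$ rather than your dyadic decomposition, but this is immaterial). Your identification of the key ingredient---that the classical $T^{A(1-\sigma)}$ density bounds would be insufficient---matches exactly what the paper relies on.
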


\begin{proof}
We have
\begin{align*}
\widetilde{\Lambda}(n)-1&=-\sum_{\substack{\rho=\beta+i\gamma\\\beta\geq 1-10\varepsilon\\|\gamma|\leq X^{1.1}}}n^{\rho-1}\\
&\ll \sum_{\substack{\rho=\beta+i\gamma\\\beta\geq 1-10\varepsilon\\|\gamma|\leq X^{1.1}}}X^{0.1(\beta-1)}.
\end{align*}
By the Vinogradov--Korobov zero-free region, we necessarily have $\beta\leq \beta_0:=1-(\log X)^{-0.667}$ for $X\geq X_0$. Hence, by splitting the values of $\beta$ into intervals of length $\leq 1/(\log X)$, we have
\begin{align*}
\sum_{\substack{\rho=\beta+i\gamma\\\beta\geq 1-10\varepsilon\\|\gamma|\leq X^{1.1}}}X^{0.1(\beta-1)}\ll (\log X) \max_{1-10\varepsilon\leq \beta\leq \beta_0}  X^{0.1(\beta-1)}N(\beta,X^{1.1}),  
\end{align*}
where $N(\beta,T)$ denotes the number of zeros $\rho$ of the Riemann zeta function with $\Re(\rho)\geq \beta$, $|\Im(\rho)|\leq X$. We may assume that $\varepsilon\leq 10^{-8}$, say. By a zero density estimate for the Riemann zeta function near the $1$-line  \cite{ford}, for $\beta\geq 1-2\cdot 10^{-7}$ (say) we have
\begin{align}\label{eq7}
N(\beta,X^{1.1})\ll (X^{1.1})^{100(1-\beta)^{3/2}}\ll X^{0.1(1-\beta)/2}.    
\end{align}
Hence we have 
\begin{align*}
\max_{1-10\varepsilon\leq \beta\leq \beta_0}  (\log X)X^{0.1(\beta-1)}N(\beta,X^{1.1})\ll X^{0.04(\beta_0-1)}\ll \exp(-(\log X)^{0.33}), 
\end{align*}
giving the claim.
\end{proof}

With the help of our model function, we can state a variance estimate that will turn out to imply Theorem \ref{thm_almostall2}.

\begin{proposition}[A variance estimate]\label{prop_variance} Let $(\log X)^{C}\leq h\leq X^{1/9}$ for large enough $C\geq 1$. Also let $H=X^{1-10\varepsilon}$. Then we have
\begin{align*}
\int_{X/2}^{X}\left|\sum_{\substack{x\leq n_1n_2\leq x+h\\h^{1-2\varepsilon}\leq n_1\leq h^{1-\varepsilon}}}\Lambda(n_1)(\Lambda-\widetilde{\Lambda})(n_2)-\frac{h}{H}\sum_{\substack{x\leq n_1n_2\leq x+H\\h^{1-2\varepsilon}\leq n_1\leq h^{1-\varepsilon}}}\Lambda(n_1)(\Lambda-\widetilde{\Lambda})(n_2)\right|^2\d x\ll h^{2-4\varepsilon}X.  
\end{align*}
\end{proposition}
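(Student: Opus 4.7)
The plan is to use the standard Dirichlet-polynomial framework for variances of arithmetic functions in short intervals, as in the author's earlier work \cite{tera-primes}. Writing the Dirichlet series of the summand as $A(s) = P(s)F(s)$ with
\begin{equation*}
P(s) := \sum_{P_1^{1-2\varepsilon}\leq n_1\leq P_1^{1-\varepsilon}}\Lambda(n_1)n_1^{-s},\qquad F(s) := \sum_{n_2\asymp X/h^{1-\varepsilon}} (\Lambda-\widetilde{\Lambda})(n_2) n_2^{-s},
\end{equation*}
I would first apply a Perron/Parseval inversion together with a Saffari--Vaughan-type comparison between the short and the long average to reduce the left-hand side to a Dirichlet polynomial integral bound of the form
\begin{equation*}
\int_{X/2}^X |S(x)|^2\,dx \ll h^2 X \int_{X/H\leq |t|\leq X/h} |P(1+it)|^2 |F(1+it)|^2\,dt + (\text{rapidly decaying tails}),
\end{equation*}
where $S(x)$ denotes the expression inside the absolute value on the left-hand side. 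Since the target bound is $h^{2-4\varepsilon}X$, it suffices to bound the inner $t$-integral by $h^{-4\varepsilon}$.

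The key input is a good pointwise estimate for $F(1+it)$. The definition of $\widetilde{\Lambda}$ has been engineered so that, via the explicit formula for $\psi$ and Abel summation applied to the twisted sum $\sum_{n\leq y}(\Lambda-\widetilde{\Lambda})(n)n^{-it}$, the contribution of zeros $\rho=\beta+i\gamma$ of $\zeta$ with $\beta\geq 1-10\varepsilon$ and $|\gamma|\leq X^{1.1}$ cancels exactly. The remaining contribution of zeros with $\beta<1-10\varepsilon$, together with the far-tail from $|\gamma|>X^{1.1}$, yields a pointwise estimate of the shape
\begin{equation*}
|F(1+it)|\ll (X/h)^{-10\varepsilon}(\log X)^{O(1)}\bigl(1 + \mathcal{N}(t)\bigr) + X^{-0.05},\qquad |t|\leq X^{1.1}/2,
\end{equation*}
where $\mathcal{N}(t):=\#\{\rho:\ 1/2\leq \beta<1-10\varepsilon,\ |\gamma-t|\leq 1\}$. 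For $P(1+it)$, the Montgomery--Vaughan mean-value theorem supplies the bound $\int_0^{X/h}|P(1+it)|^2\,dt \ll X(\log X)^{O(1)}/h^{2-2\varepsilon}$.

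To produce the required $h^{-4\varepsilon}$ saving I would partition $[X/H,X/h]$ into level sets of $\mathcal{N}(t)$. On the generic set $\{\mathcal{N}(t)\ll 1\}$, the pointwise bound on $F$ combined with the mean-value for $P$ gives the estimate directly. On each exceptional level set $\{\mathcal{N}(t)\geq V\}$, I would bound the measure of the set via the Ford zero density estimate \cite{ford} already used in the proof of Lemma \ref{le_bound}, and bound $\int|P(1+it)|^2\,dt$ restricted to that thin set by a Hal\'asz-type large-values inequality for short prime polynomials. I expect the main obstacle to be the delicate balance between the zero-density input and the Hal\'asz-type large-values input needed to achieve the saving uniformly for $(\log X)^C\leq h\leq X^{1/9}$: since $\varepsilon$ is already constrained to lie below roughly $10^{-8}$ so that Ford's zero density estimate applies in the relevant range $\beta\geq 1-10\varepsilon$ (as in Lemma \ref{le_bound}), the parameters $\varepsilon$ and $C$ must be chosen to fit inside this narrow window.
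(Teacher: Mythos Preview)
Your reduction to a Dirichlet-polynomial mean value is correct, and so is the key observation that the explicit formula gives a power-saving pointwise bound for $F(1+it)$ once the zeros with $\beta\geq 1-10\varepsilon$ have been subtracted off via $\widetilde{\Lambda}$ (this is precisely Lemma~\ref{le_pointwise} in the paper, which gives $|F(1+it)|\ll 1/|t|+X^{-8\varepsilon}$). However, the plan for estimating the resulting integral has a genuine gap.

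The step that breaks is ``on the generic set $\{\mathcal{N}(t)\ll 1\}$, the pointwise bound on $F$ combined with the mean value for $P$ gives the estimate directly.'' It does not. The polynomial $P$ has length at most $h^{1-\varepsilon}$, while the $t$-range has length $\asymp X/h$, so the Montgomery--Vaughan mean value is dominated by the $T$ term and yields
\[
\int_{0}^{X/h}|P(1+it)|^{2}\,\d t \;\asymp\; \frac{X(\log X)^{O(1)}}{h^{2-2\varepsilon}}.
\]
Multiplying by your pointwise bound $|F(1+it)|^{2}\ll (X/h)^{-20\varepsilon}(\log X)^{O(1)}$ gives
\[
X^{1-20\varepsilon}(\log X)^{O(1)}/h^{2-22\varepsilon},
\]
and for $h\leq X^{1/9}$ this is $\gg X^{7/9-O(\varepsilon)}$, not $\ll h^{-4\varepsilon}$. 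No zero-density refinement on level sets of $\mathcal{N}(t)$ helps here: by the Riemann--von Mangoldt formula one always has $\mathcal{N}(t)\ll\log X$, so the ``exceptional'' sets are empty and the whole weight sits on the generic set, where the argument fails as above.

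The fix, and what the paper does, is to swap the roles of the two polynomials and to split according to the size of $|P(1+it)|$ rather than $\mathcal{N}(t)$ (the Matom\"aki--Radziwi\l{}\l{} dichotomy). Where $|P(1+it)|\leq P_1^{-5\varepsilon}$, one applies the mean value theorem to the \emph{long} polynomial $F$ (length $\asymp X/h^{1-2\varepsilon}$, comparable to the $t$-range), which gives an $O(1)$ bound up to $h^{O(\varepsilon)}$ factors; combined with the pointwise $|P|^{2}\leq h^{-10\varepsilon}$ this yields $\ll h^{-4\varepsilon}$. Where $|P(1+it)|> P_1^{-5\varepsilon}$, one uses a large-values estimate for $P$ (raising $P$ to a high power and applying the mean value theorem, as in \cite[Lemma 6]{tera-primes}) to bound the number of well-spaced such $t$ by $X^{O(\varepsilon)}$, and then inserts the pointwise bound $|F(1+it)|^{2}\ll X^{-16\varepsilon}$ from Lemma~\ref{le_pointwise}. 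In short: mean value on the long factor, large values on the short factor --- not the other way around.
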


\begin{proof}[Proof of Theorem \ref{thm_almostall2} assuming Proposition \ref{prop_variance}] By Lemma \ref{le_bound} and  Chebyshev's inequality, it suffices to show for all $x\in [X/2,X]$ that
\begin{align}\label{eq2}
\sum_{\substack{x\leq n_1n_2\leq x+H\\h^{1-2\varepsilon}\leq n_1\leq h^{1-\varepsilon}}}\Lambda(n_1)(\Lambda(n_2)-1)\ll_A H(\log X)^{-A}   
\end{align}
and
\begin{align}\label{eq3}
\sum_{\substack{x\leq n_1n_2\leq x+h'\\h^{1-2\varepsilon}\leq n_1\leq h^{1-\varepsilon}}}\Lambda(n_1)(\widetilde{\Lambda}(n_2)-1)\ll_A h'(\log X)^{-A}    
\end{align}
for $h'\in \{h,H\}$. 

The first claim \eqref{eq2} follows directly by writing 
\begin{align*}
\sum_{\substack{x\leq n_1n_2\leq x+H\\h^{1-2\varepsilon}\leq n_1\leq h^{1-\varepsilon}}}\Lambda(n_1)(\Lambda(n_2)-1)=\sum_{h^{1-2\varepsilon}\leq n_1\leq h^{1-\varepsilon}}\Lambda(n_1)\sum_{x/n_1\leq n_2\leq (x+H)/n_1}(\Lambda(n_2)-1)    
\end{align*}
and applying the prime number theorem in short intervals to the $n_2$ sum.

For the proof of \eqref{eq3}, note simply that by Lemma \ref{le_bound} for $x\in [X/2,X]$ we have
\[\pushQED{\qed} 
\sum_{\substack{x\leq n_1n_2\leq x+h'\\h^{1-2\varepsilon}\leq n_1\leq h^{1-\varepsilon}}}\Lambda(n_1)|\widetilde{\Lambda}(n_2)-1|\ll \sum_{h^{1-2\varepsilon}\leq n_1\leq h^{1-\varepsilon}} \frac{(\log h) h'}{n_1}\exp(-(\log X)^{0.33})\ll_A \frac{h'}{(\log X)^{A}}.\qedhere  
\]
\end{proof}

Before proving Proposition \ref{prop_variance}, we shall reduce it to mean squares of Dirichlet polynomials.

\begin{proposition}[Mean square bound for a product of two prime Dirichlet polynomials]\label{prop_dirichlet} Let
\begin{align*}
 P_1(s):=\sum_{h^{1-2\varepsilon}\leq n\leq h^{1-\varepsilon}}\Lambda(n)n^{-s},\quad \widetilde{P}(s)=\sum_{X/(2h^{1-\varepsilon})\leq n\leq X/h^{1-2\varepsilon}}(\Lambda-\widetilde{\Lambda})(n)n^{-s}.
\end{align*}
Also let $(\log X)^{C}\leq h\leq X^{1/9}$ with $C\geq 1$ large enough. Then we have
\begin{align*}
\int_{h^{10\varepsilon}}^{X/h^{1-4\varepsilon}}|P_1(1+it)|^2|\widetilde{P}(1+it)|^2\d t\ll h^{-4\varepsilon}. \end{align*}
\end{proposition}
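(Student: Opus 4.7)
The plan is to follow the Halász--Montgomery / Matom\"aki--Radziwi\l\l{} style framework for mean-square bounds on Dirichlet polynomials, making essential use of the fact that the model $\widetilde{\Lambda}$ has been designed to annihilate exactly the contributions from the ``bad'' zeros $\rho$ of $\zeta$ with $\Re\rho \geq 1-10\varepsilon$ and $|\Im\rho| \leq X^{1.1}$.

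First, I would apply Perron's formula to $\widetilde{P}(1+it)$ (starting from the Dirichlet series $-\zeta'/\zeta$ against the weight $\mathbf{1}_{[N,2N]}(n)n^{-1-it}$ with $N \asymp X/h$) and shift the contour past the line $\Re s = 1$, collecting residues at the zeros of $\zeta$. After cancellation with the zero sum built into $\widetilde{\Lambda}$, this yields
\[
\widetilde{P}(1+it) = -\sum_{\substack{\rho=\beta+i\gamma \\ \beta < 1-10\varepsilon \text{ or } |\gamma| > X^{1.1}}} \frac{(2N)^{\rho-1-it} - N^{\rho-1-it}}{\rho-1-it} + O(X^{-c}).
\]
For the range $|t| \leq X/h^{1-4\varepsilon}$ of interest the zeros with $|\gamma| > X^{1.1}$ contribute a negligible error by the Riemann--von Mangoldt formula, so only zeros with $\beta < 1-10\varepsilon$ effectively appear, each contributing at most $N^{-10\varepsilon}/(|t-\gamma|+1)$.

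Second, I would combine this expansion with two standard tools: (i) a Halász--Montgomery large-values estimate for $\widetilde{P}$, built from Ford's zero density estimate $N(\beta,T) \ll T^{100(1-\beta)^{3/2}}$ near $\beta=1$ (as already exploited in Lemma \ref{le_bound}), Huxley's density estimate $N(\beta, T) \ll T^{(12/5)(1-\beta)}(\log T)^{44}$ in the intermediate range, and the Riemann--von Mangoldt bound near $\beta = 1/2$; and (ii) the standard mean value theorem for the short prime polynomial,
\[
\int_0^T |P_1(1+it)|^2\,dt \ll (T + h^{1-\varepsilon})\,\frac{(\log h)^2}{h^{1-2\varepsilon}}.
\]
I would dyadically partition $|t| \asymp T$ across $[h^{10\varepsilon}, X/h^{1-4\varepsilon}]$ and split each dyadic piece by level sets $V < |\widetilde{P}(1+it)| \leq 2V$. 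On each level set, the contribution is estimated by (measure of the set, from the Halász bound) $\times V^2 \times$ (mean value of $|P_1|^2$); summing over levels and dyadic pieces and optimizing in $V$ should yield the target bound $h^{-4\varepsilon}$.

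The principal obstacle is extracting a sufficiently sharp Halász-type large-values estimate. The zero-subtraction structure of $\widetilde{\Lambda}$ delivers only an improvement of roughly $(X/h)^{-c\varepsilon}$ over the trivial mean-value bound for $\widetilde{P}$, and this must be large enough to beat the factor $T/N \ll h^{4\varepsilon}$ from the length ratio in the mean-value theorem while still producing a saving of $h^{-4\varepsilon}$. Calibrating the zero density estimates at different distances from the $1$-line is delicate; Ford's estimate in particular is critical for ruling out potential clusters of zeros very close to the line $\Re s = 1-10\varepsilon$ that the Vinogradov--Korobov zero-free region alone does not preclude, and the lower endpoint $h^{10\varepsilon}$ in the range of $t$ plays a subtle role in ensuring that the Perron kernels $1/(\rho-1-it)$ average small enough to close the argument.
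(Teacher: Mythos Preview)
Your explicit-formula step is right and is essentially Lemma~\ref{le_pointwise}: once the zeros with $\beta\geq 1-10\varepsilon$ are removed by $\widetilde{\Lambda}$, only zeros with $\beta<1-10\varepsilon$ remain, and one gets the \emph{pointwise} bound $|\widetilde{P}(1+it)|\ll 1/|t|+X^{-8\varepsilon}$. But from there your plan inverts the roles of the two polynomials, and the argument does not close. You propose to decompose into level sets of $|\widetilde{P}|$ and feed in the mean value theorem for $|P_1|^2$; however, $P_1$ has length $\asymp h$ while the range of integration has length $\asymp X/h^{1-4\varepsilon}$, so
\[
\int_{0}^{X/h^{1-4\varepsilon}}|P_1(1+it)|^2\,\d t\;\asymp\;\frac{X}{h^{2-6\varepsilon}}(\log h)^2,
\]
and combining this with the best available bound $|\widetilde{P}|^2\ll X^{-16\varepsilon}$ gives only $X^{1-16\varepsilon}h^{-2+6\varepsilon}$, which is enormous when $h\leq X^{1/9}$. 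A Hal\'asz-type level-set argument for $\widetilde{P}$ cannot help: above the threshold $X^{-8\varepsilon}$ the level sets are already empty, and the phrase ``(measure of the set)\,$\times V^2\times$\,(mean value of $|P_1|^2$)'' does not correspond to a valid inequality---you cannot replace $|P_1|^2$ by its average over an arbitrary subset. Your appeal to Ford's and Huxley's zero-density estimates near the $1$-line is also a red herring here: those zeros have been subtracted out by the definition of $\widetilde{\Lambda}$, and the surviving zeros (all with $\beta<1-10\varepsilon$) are handled by the trivial Riemann--von Mangoldt count.

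The paper's proof reverses the roles. It splits according to the size of the \emph{short} polynomial $P_1$. Where $|P_1(1+it)|\leq P_1^{-5\varepsilon}$ one pulls out $|P_1|^2\leq h^{-10\varepsilon}$ and applies the mean value theorem to the \emph{long} polynomial $\widetilde{P}$ (for which it is essentially sharp), obtaining $\ll h^{-10\varepsilon}\cdot h^{3\varepsilon}\ll h^{-4\varepsilon}$. On the complementary set a standard high-moment argument (raise $P_1$ to a large power and use the mean value theorem) shows there are only $\ll X^{11\varepsilon}$ well-spaced points $t$ with $|P_1(1+it)|$ large; on these one uses the pointwise bound $|\widetilde{P}(1+it)|\ll 1/|t|+X^{-8\varepsilon}$ together with the trivial $|P_1|\ll 1$ to get $\ll h^{-10\varepsilon}+X^{11\varepsilon}\cdot X^{-16\varepsilon}\ll h^{-4\varepsilon}$. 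The decisive point is that the saving in $\widetilde{P}$ is a power of $X$, which is only useful against a set of size $X^{O(\varepsilon)}$; it is the large-values estimate for the short polynomial $P_1$ that produces such a set, not any large-values information about $\widetilde{P}$.
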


\begin{proof}[Proof of Proposition \ref{prop_variance} assuming Proposition \ref{prop_dirichlet}] This is a standard Perron formula argument. Let
\begin{align*}
a_n=\sum_{\substack{n=n_1n_2\\h^{1-2\varepsilon}\leq n_1\leq h^{1-\varepsilon}\\X/(2h^{1-\varepsilon})\leq n_2\leq X/h^{1-2\varepsilon}}}\Lambda(n_1)\Lambda(n_2),\quad S_y(x)=\frac{1}{y}\sum_{x\leq n\leq x+y}a_n,\quad F(s)=\sum_n a_nn^{-s}.
\end{align*}
Also let $H=X^{1-10\varepsilon}$. By \cite[Lemma 1]{tera-primes}, we have\footnote{In \cite[Lemma 1]{tera-primes}, $a_n$ is assumed to be supported in $[X,2X]$, but this is actually not used in the proof.}
\begin{align*}
\int_{X/2}^{X}\left|\frac{1}{h}S_h(x)-\frac{1}{H}S_H(x)\right|^2\d x\ll h^{-10\varepsilon}+\int_{h^{10\varepsilon}}^{X/h}|F(1+it)|^2\d t+\max_{T\geq X/h}\frac{X}{Th}\int_{T}^{2T}|F(1+it)|^2\d t.   \end{align*}
Now the claim follows by applying the mean value theorem for Dirichlet polynomials in the range $T\geq X/h^{1-4\varepsilon}$.
\end{proof}

Before proving Proposition \ref{prop_dirichlet}, we need one more lemma.

\begin{lemma}\label{le_pointwise}
For $2\leq |t|\leq X$, we have
\begin{align*}
|\widetilde{P}(1+it)|\ll 1/|t|+X^{-8\varepsilon}.    
\end{align*}
\end{lemma}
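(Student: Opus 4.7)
The idea is to use the key identity
\[
(\Lambda-\widetilde{\Lambda})(n) = (\Lambda(n)-1)+\sum_{\rho\in Z}n^{\rho-1},\qquad Z := \{\rho=\beta+i\gamma:\beta\ge 1-10\varepsilon,\ |\gamma|\le X^{1.1}\},
\]
so that after summing against $n^{-1-it}$ the zero contributions ``built in'' to $\widetilde{\Lambda}$ cancel the corresponding zero contributions that appear when one expands $\sum_n\Lambda(n)n^{-1-it}$ via the explicit formula. Writing $I=[a,b]$ with $a=X/(2P_1^{1-\varepsilon})$ and $b=X/P_1^{1-2\varepsilon}$ (so that $a\asymp b\gg X^{0.89}$), I plan to combine (i) the truncated explicit formula
\[
\sum_{n\le y}\Lambda(n)=y-\sum_{|\gamma|\le X^{1.1}}\frac{y^\rho}{\rho}+O\!\left(\frac{y(\log y)^2}{X^{1.1}}+\log y\right)
\]
plus Abel summation against $y^{-1-it}$; (ii) Euler--Maclaurin for the plain sum $\sum_{n\in I}n^{-1-it}$; and (iii) an integral approximation for each $\sum_{n\in I}n^{\rho-2-it}$ with $\rho\in Z$, using the standard formula $\sum_{n\le x}n^{-s}=x^{1-s}/(1-s)+\zeta(s)+O(x^{-\Re s})$ whenever $x\ge|s|$.

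After these three inputs, the smooth main term $(a^{-it}-b^{-it})/(it)$ cancels between the $\Lambda$-sum and the $(-1)$-sum, and the $\rho\in Z$ contributions cancel between the explicit-formula output for $\Lambda$ and the added zero sum from $\widetilde{\Lambda}$. One is left with
\[
\widetilde{P}(1+it) = -\sum_{\substack{|\gamma|\le X^{1.1}\\ \beta<1-10\varepsilon}}\frac{b^{\rho-1-it}-a^{\rho-1-it}}{\rho-1-it}+O(1/|t|)+O(X^{-8\varepsilon}),
\]
where the $O(1/|t|)$ absorbs any imperfect cancellation from the boundary pieces in Euler--Maclaurin and the $O(X^{-8\varepsilon})$ absorbs the explicit-formula truncation error, which for $T=X^{1.1}$ and $|t|\le X$ is only $\ll X^{-0.1+o(1)}$.

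It remains to bound the surviving zero sum. Since $\beta<1-10\varepsilon$ and $a\ge X^{0.89}$, each numerator satisfies $|b^{\rho-1-it}|,|a^{\rho-1-it}|\le a^{\beta-1}\le X^{-8.9\varepsilon}$. For zeros with $|\gamma-t|\le 1$ I use $|\rho-1-it|\ge 1-\beta\ge(\log X)^{-0.667}$ from Vinogradov--Korobov together with the $O(\log X)$ count of such zeros, giving a contribution $\ll (\log X)^{1.667}X^{-8.9\varepsilon}$. For $|\gamma-t|\ge 1$, a dyadic decomposition plus the standard count $\#\{\rho:|\gamma-t|\in[U,2U]\}\ll U\log X$ yields $\ll(\log X)^2 X^{-8.9\varepsilon}$. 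Both are comfortably $\ll X^{-8\varepsilon}$.

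The most delicate point is step (iii): for $\rho\in Z$ with $|\gamma-t|>a\approx X^{0.89}$ the integral approximation $\sum_{n\in I}n^{\rho-2-it}\approx(b^{\rho-1-it}-a^{\rho-1-it})/(\rho-1-it)$ loses its good error term, so one must instead use the trivial bound $\sum n^{\beta-2}\ll a^{\beta-1}/(1-\beta)$. What rescues us is the extreme sparsity of $Z$: Ford's density estimate \eqref{eq7} gives $|Z|\le X^{110(10\varepsilon)^{3/2}}=X^{o(1)}$ for $\varepsilon$ small, while the Vinogradov--Korobov bound $1-\beta\ge(\log X)^{-0.667}$ forces the super-polynomial decay $a^{\beta-1}\le\exp(-0.89(\log X)^{1/3})$. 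Their product is far smaller than $X^{-8\varepsilon}$, so the contribution of these ``far'' zeros is absorbed into the error.
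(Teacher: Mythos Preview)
Your overall strategy matches the paper's: expand the $\Lambda$-sum by the explicit formula, expand the $\widetilde\Lambda$-sum term by term, and observe that the contributions of zeros $\rho\in Z$ are designed to cancel, leaving only the harmless zeros with $\beta<1-10\varepsilon$. The difference lies in step~(iii), and there your argument breaks down.

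The problem is the final paragraph. You correctly note that the Euler--Maclaurin approximation
\[
\sum_{n\in I} n^{\rho-2-it}=\frac{b^{\rho-1-it}-a^{\rho-1-it}}{\rho-1-it}+O\bigl(|2-\rho+it|\,a^{-(2-\beta)}\bigr)
\]
is useless once $|\gamma-t|\gg a$, and you fall back on the trivial bound $\sum_{n\in I}n^{\beta-2}\ll a^{\beta-1}/(1-\beta)$. However, for $\rho\in Z$ one only has $1-\beta\ge(\log X)^{-0.667}$ from Vinogradov--Korobov, so $a^{\beta-1}\le\exp\bigl(-0.89(\log X)^{0.333}\bigr)$. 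This is \emph{not} ``far smaller than $X^{-8\varepsilon}$'': for any fixed $\varepsilon>0$ one has $X^{-8\varepsilon}=\exp(-8\varepsilon\log X)$, which decays much faster. Your quantity $\exp(-c(\log X)^{1/3})$ is sub-polynomial in $X$, not super-polynomial; multiplying by $|Z|\le X^{o(1)}$ does nothing to help. So the ``far'' zeros in $Z$ are not absorbed, and the proof as written does not give the power saving $X^{-8\varepsilon}$ that the lemma requires.

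The paper avoids this by computing $\sum_{n\in I}n^{\xi-2-it}$ not via Euler--Maclaurin but via a truncated Perron integral of $\zeta(s+2-\xi+it)$ to height $X^{10\varepsilon}$, followed by a contour shift to $\Re(s)=\Re(\xi)-2$. The point is that on the shifted line one can invoke $|\zeta(iu)|\ll(1+|u|)^{1/2}$, and the factor $Q_j^{s}$ becomes $\ll Q_j^{\Re(\xi)-2}\le Q_1^{-1}\ll X^{-0.89}$, giving a \emph{uniform} error $O(X^{-9\varepsilon})$ per zero regardless of the size of $|\gamma-t|$; when $|\gamma-t|\ge X^{10\varepsilon}$ the residue is simply not picked up, and in that regime the matching explicit-formula term $c_\rho$ is itself $\ll X^{-10\varepsilon}$, so no cancellation is needed. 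Summing the uniform $O(X^{-9\varepsilon})$ over $|Z|\ll X^{\varepsilon}$ zeros (for $\varepsilon$ small enough, via Ford's estimate) then yields the claimed $O(X^{-8\varepsilon})$. Your Euler--Maclaurin route cannot reproduce this uniformity; you would need either the Perron/contour-shift argument or a nontrivial exponential-sum bound for $\sum_{n\in I}n^{i(\gamma-t)}$ that exploits the oscillation when $|\gamma-t|$ is large.
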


\begin{proof}
Let $Q_1=X/(2h^{1-\varepsilon})$, $Q_2=X/h^{1-2\varepsilon}$. By a slight variant of the explicit formula (which is proved in the same way; cf. arguments in \cite[Section 5]{iw-kow}), for $2\leq |t|\leq X$ we have
\begin{align*}
\sum_{Q_1\leq n\leq Q_2}\Lambda(n)n^{-1-it}=-\frac{Q_2^{it}-Q_1^{it}}{it}-\sum_{\substack{\rho=\beta+i\gamma\\|\gamma|\leq X^{1.1}}} \frac{Q_2^{\rho-1-it}-Q_1^{\rho-1-it}}{\rho-it}+O\left(\frac{Q_2(\log Q_2)^3}{X^{1.1}}\right).    
\end{align*}
Here the error term is $\ll X^{-10\varepsilon}$ (if $\varepsilon\leq 1/110$) and the first term is $\ll |t|^{-1}$. Note that the contribution of $\beta<1-10\varepsilon$ above is $\ll X^{-8\varepsilon}$, using $\sum_{\rho:|\Im(\rho)|\leq X^{1.1}}1/|\rho-it|\ll (\log X)^2$ and $Q_1\gg X/h\gg X^{8/9}$.

On the other hand, we have
\begin{align*}
\sum_{Q_1\leq n\leq Q_2}\widetilde{\Lambda}(n)n^{-1-it}=\sum_{Q_1\leq n\leq Q_2}n^{-1-it}- \sum_{\substack{\rho=\beta+i\gamma\\\beta\geq 1-10\varepsilon\\|\gamma|\leq X^{1.1}}}\sum_{Q_1\leq n\leq Q_2}n^{\rho-2-it}.   
\end{align*}
By Perron's formula, for any $\xi$ with $\Re(\xi)\leq 1$ we have
\begin{align*}
\sum_{Q_1\leq n\leq Q_2}n^{\xi-2-it}&=\frac{1}{2\pi i}\int_{1-X^{10\varepsilon}i}^{1+X^{10\varepsilon}i}\zeta(s+2-\xi+it)\frac{Q_2^s-Q_1^s}{s}\d s+O(X^{-9\varepsilon}).
\end{align*}
Shifting the line of integration to $\Re(s)=\Re(\xi)-2$ and applying 
the residue theorem and the estimate $|\zeta(iu)|\ll (1+|u|)^{1/2}$, this is
\begin{align*}
\frac{Q_2^{\xi-1-it}-Q_1^{\xi-1-it}}{\xi-1-it}+O(X^{-9\varepsilon}),    
\end{align*}
since either the simple pole at $s=\xi-1-it$ is captured by the integral unless $|\Im(\xi)-t|\geq X^{10\varepsilon}$.
Applying the above with $\xi=1$ and $\xi=\rho$, and recalling \eqref{eq7}, the claim follows.
\end{proof}

\begin{proof}[Proof of Proposition \ref{prop_dirichlet}]  We apply the Matom\"aki--Radziwi\l{}\l{} method \cite{mr-annals}. We split the integration domain into two sets
\begin{align*}
\mathcal{T}_1=\{h^{10\varepsilon}\leq t\leq X/h^{1-4\varepsilon}:\,\, |P_1(1+it)|\geq h^{-5\varepsilon}\},\quad \mathcal{T}_2=[h^{10\varepsilon},X/h^{1-4\varepsilon}]\setminus \mathcal{T}_1.    
\end{align*}
By the mean value theorem for Dirichlet polynomials, we trivially have
\begin{align*}
\int_{\mathcal{T}_1}|P_1(1+it)|^2|\widetilde{P}(1+it)|^2\d t\ll h^{-10\varepsilon}\left(\frac{X/h^{1-4\varepsilon}+X/h^{1-\varepsilon}}{X/h^{1-\varepsilon}}\right)\ll h^{-4\varepsilon}.    
\end{align*}
Consider then the integral over $\mathcal{T}_2$. By a large values estimate (\cite[Lemma 6]{tera-primes},  which is proved by raising $P_1$ to a large power and applying the mean value theorem), if $\mathcal{U}\subset \mathcal{T}_2$ is any well-spaced subset (i.e., any two of its elements are separated by $\geq 1$), then (taking $C$ large in terms of $\varepsilon$) we have
\begin{align*}
|\mathcal{U}|\ll X^{11\varepsilon}.    
\end{align*}
On the other hand, for some well-spaced $\mathcal{U}\subset \mathcal{T}_2$ we have
\begin{align*}
 \int_{\mathcal{T}_2}|P_1(1+it)|^2|\widetilde{P}(1+it)|^2\d t\ll   \sum_{t\in \mathcal{U}}|P_1(1+it)|^2|\widetilde{P}(1+it)|^2,
\end{align*}
and crudely bounding $|P_1(1+it)|\ll 1$ and using Lemma \ref{le_pointwise}, we can bound this by
\begin{align*}
\ll h^{-10\varepsilon}+|\mathcal{U}|X^{-16\varepsilon}\ll h^{-10\varepsilon}+X^{-5\varepsilon}\ll h^{-4\varepsilon}.    
\end{align*}
This proves the claim.
\end{proof}

\bibliography{refs}
\bibliographystyle{plain}

\end{document}